\begin{document}

\font\eightrm=cmr8
\font\eightit=cmti8
\font\eighttt=cmtt8
\def\tci
{\hbox{\hskip1.8pt$\rightarrow$\hskip-11.5pt$^{^{C^\infty}}$\hskip-1.3pt}}
\def\nft
{\hbox{$n$\hskip3pt$\equiv$\hskip4pt$5$\hskip4.4pt$($mod\hskip2pt$3)$}}
\def\bbR{\mathrm{I\!R}}
\def\rto{\bbR\hskip-.5pt^2}
\def\rtr{\bbR\hskip-.7pt^3}
\def\rfo{\bbR\hskip-.7pt^4}
\def\rn{\bbR^{\hskip-.6ptn}}
\def\mr{\bbR^{\hskip-.6ptm}}
\def\bbZ{\mathsf{Z\hskip-4ptZ}}
\def\bbRP{\text{\bf R}\text{\rm P}}
\def\bbC{\text{\bf C}}
\def\hyp{\hskip.5pt\vbox
{\hbox{\vrule width3ptheight0.5ptdepth0pt}\vskip2.2pt}\hskip.5pt}
\def\er{r}
\def\es{s}
\def\df{d\hskip-.8ptf}
\def\fv{\mathcal{F}}
\def\fvp{\fv_{\nrmh p}}
\def\ns{\nabla\hskip-2.7pt_}
\def\wv{\mathcal{W}}
\def\vt{\mathcal{P}}
\def\dz{\mathcal{P}}
\def\dzx{\mathcal{P}\hskip-2pt_x\w}
\def\vz{\mathcal{V}}
\def\hz{\mathcal{H}}
\def\ko{\mathcal{L}}
\def\vx{\vz\hskip-2.3pt_x\w}
\def\tv{\mathcal{T}}
\def\vtx{\vt_{\nh x}}
\def\fh{f}
\def\rc{\theta}
\def\jm{\mathcal{I}}
\def\ke{\mathcal{K}}
\def\xc{\mathcal{X}_c}
\def\lz{\mathcal{L}}
\def\tzq{T\hskip-2.7pt_z\w\nh\qe}
\def\tyhm{{T\hskip-3.5pt_y\w\hm}}
\def\tlm{{T\hskip-3.5pt_L\w\nnh M}}
\def\tm{T\nh\mf}
\def\Lie{\pounds}
\def\lv{\Lie\hskip-1.2pt_v\w}
\def\lo{\lz_0}
\def\xe{\mathcal{E}}
\def\eo{\xe_0}
\def\hm{\hskip1.9pt\widehat{\hskip-1.9ptM\hskip-.2pt}\hskip.2pt}
\def\hmt{\hskip1.9pt\widehat{\hskip-1.9ptM\hskip-.5pt}_t}
\def\hmz{\hskip1.9pt\widehat{\hskip-1.9ptM\hskip-.5pt}_0}
\def\hmp{\hskip1.9pt\widehat{\hskip-1.9ptM\hskip-.5pt}_p}
\def\hg{\hskip1.2pt\widehat{\hskip-1.2ptg\hskip-.4pt}\hskip.4pt}
\def\hk{\hskip1.5pt\widehat{\hskip-1.5ptK\hskip-.5pt}\hskip.5pt}
\def\hq{\hskip1.5pt\widehat{\hskip-1.5ptQ\hskip-.5pt}\hskip.5pt}
\def\q{s}
\def\qe{Q}
\def\bq{\hat q}
\def\p{p}
\def\w{^{\phantom i}}
\def\x{v}
\def\y{y}
\def\vp{\vt^\perp}
\def\vd{\vt\hh'}
\def\vdx{\vd{}\hskip-4.5pt_x}
\def\bz{b\hh}
\def\fe{F}
\def\fy{\phi}
\def\vl{\Lambda}
\def\stf{\lambda}
\def\hy{\mathcal{V}}
\def\vh{h}
\def\mv{V}
\def\vo{V_{\nnh0}}
\def\ao{A_0}
\def\bo{B_0}
\def\uv{\mathcal{U}}
\def\sv{\mathcal{S}}
\def\svp{\sv_p}
\def\xv{\mathcal{X}}
\def\xvp{\xv_p}
\def\yv{\mathcal{Y}}
\def\yvp{\yv_p}
\def\zv{\mathcal{Z}}
\def\zvp{\zv_p}
\def\cv{\mathcal{C}}
\def\dy{\mathcal{D}}
\def\nv{\mathcal{N}}
\def\iv{\mathcal{I}}
\def\gkp{\Sigma}
\def\ret{\pi}
\def\hs{\hskip.7pt}
\def\hh{\hskip.4pt}
\def\hn{\hskip-.4pt}
\def\nh{\hskip-.7pt}
\def\nnh{\hskip-1pt}
\def\hrz{^{\hskip.5pt\text{\rm hrz}}}
\def\vrt{^{\hskip.2pt\text{\rm vrt}}}
\def\vt{\varTheta}
\def\op{S}
\def\vg{\varGamma}
\def\bvg{\hskip3pt\overline{\hskip-3pt\vg\nh}\hs}
\def\ny{\nu}
\def\gy{\lambda}
\def\ax{\alpha}
\def\of{\omega}
\def\bx{\beta}
\def\cx{\gamma}
\def\ay{a}
\def\by{b}
\def\cy{c}
\def\gp{\mathrm{G}}
\def\hp{\mathrm{H}}
\def\kp{\mathrm{K}}
\def\gm{\gamma}
\def\Gm{\Gamma}
\def\Lm{\Lambda}
\def\Dt{\Delta}
\def\dg{\Delta}
\def\sj{\sigma}
\def\lg{\langle}
\def\rg{\rangle}
\def\lr{\lg\hh\cdot\hs,\hn\cdot\hh\rg}
\def\vs{vector space}
\def\rvs{real vector space}
\def\vf{vector field}
\def\tf{tensor field}
\def\tvn{the vertical distribution}
\def\dn{distribution}
\def\pt{point}
\def\tc{tor\-sion\-free connection}
\def\ea{equi\-af\-fine}
\def\rt{Ric\-ci tensor}
\def\pde{partial differential equation}
\def\pf{projectively flat}
\def\pfs{projectively flat surface}
\def\pfc{projectively flat connection}
\def\pftc{projectively flat tor\-sion\-free connection}
\def\su{surface}
\def\sco{simply connected}
\def\psr{pseu\-\hbox{do\hs-}Riem\-ann\-i\-an}
\def\inv{-in\-var\-i\-ant}
\def\trinv{trans\-la\-tion\inv}
\def\feo{dif\-feo\-mor\-phism}
\def\feic{dif\-feo\-mor\-phic}
\def\feicly{dif\-feo\-mor\-phi\-cal\-ly}
\def\Feicly{Dif\-feo\-mor\-phi\-cal\-ly}
\def\diml{-di\-men\-sion\-al}
\def\prl{-par\-al\-lel}
\def\skc{skew-sym\-met\-ric}
\def\sky{skew-sym\-me\-try}
\def\Sky{Skew-sym\-me\-try}
\def\dbly{-dif\-fer\-en\-ti\-a\-bly}
\def\cs{con\-for\-mal\-ly symmetric}
\def\cf{con\-for\-mal\-ly flat}
\def\ls{locally symmetric}
\def\ecs{essentially con\-for\-mal\-ly symmetric}
\def\rr{Ric\-ci-re\-cur\-rent}
\def\kf{Killing field}
\def\om{\omega}
\def\vol{\varOmega}
\def\dv{\delta}
\def\ve{\varepsilon}
\def\zt{\zeta}
\def\kx{\kappa}
\def\mf{M}
\def\my{\mf\hskip-2.3pt_y\w}
\def\bs{\varSigma}
\def\tab{{T\hskip-.2pt^*\hskip-2.3pt\bs}} 
\def\tayb{{T_{\hskip-1.4pty}^*\hskip-2pt\bs}}
\def\tazq{{T_{\hskip-1.4ptz}^*\hskip-2pt\qe}} 
\def\taxb{{T_x^*\hskip-1pt\bs}} 
\def\tyb{{T\hskip-2pt_y\w\hskip-.2pt\bs}} 
\def\txm{{T\hskip-3pt_x\w\hskip-.5ptM}} 
\def\tym{{T\hskip-2pt_y\w\hskip-.9ptM}} 
\def\mfd{-man\-i\-fold}
\def\bmf{base manifold}
\def\bd{bundle}
\def\qs{\mathrm{q}}
\def\mq{\mathrm{p}}
\def\tbd{tangent bundle}
\def\ctb{cotangent bundle}
\def\bp{bundle projection}
\def\prc{pseu\-\hbox{do\hs-}Riem\-ann\-i\-an metric}
\def\prd{pseu\-\hbox{do\hs-}Riem\-ann\-i\-an manifold}
\def\Prd{pseu\-\hbox{do\hs-}Riem\-ann\-i\-an manifold}
\def\npd{null parallel distribution}
\def\pj{-pro\-ject\-a\-ble}
\def\pd{-pro\-ject\-ed}
\def\lcc{Le\-vi-Ci\-vi\-ta connection}
\def\vb{vector bundle}
\def\vbm{vec\-tor-bun\-dle morphism}
\def\kerd{\text{\rm Ker}\hskip2.7ptd}
\def\ro{\rho}
\def\sy{\sigma}
\def\ts{total space}
\def\pmb{\pi}

\newtheorem{theorem}{Theorem}[section] 
\newtheorem{proposition}[theorem]{Proposition} 
\newtheorem{lemma}[theorem]{Lemma} 
\newtheorem{corollary}[theorem]{Corollary} 
  
\theoremstyle{definition} 
  
\newtheorem{defn}[theorem]{Definition} 
\newtheorem{notation}[theorem]{Notation} 
\newtheorem{example}[theorem]{Example} 
\newtheorem{conj}[theorem]{Conjecture} 
\newtheorem{prob}[theorem]{Problem} 
  
\theoremstyle{remark} 
  
\newtheorem{remark}[theorem]{Remark}

\renewcommand{\theequation}{\arabic{section}.\arabic{equation}}

\title[Null-pro\-ject\-a\-bil\-i\-ty of Le\-vi-Ci\-vi\-ta
connections]{Null-pro\-ject\-a\-bil\-i\-ty of Le\-vi-Ci\-vi\-ta connections}
\author[A. Derdzinski]{Andrzej Derdzinski} 
\address{Department of Mathematics, The Ohio State University, 
Columbus, OH 43210} 
\email{andrzej@math.ohio-state.edu} 
\author[K.\ Masood]{Kirollos Masood} 
\address{Department of Mathematics, The Ohio State University, 
Columbus, OH 43210} 
\email{masood.24@buckeyemail.osu.edu} 
\subjclass[2020]{Primary 53B30; Secondary 53B05}
\def\leftmark{A.\ Derdzinski \&\ K.\ Masood}
\def\rightmark{Null-pro\-ject\-a\-bil\-i\-ty of Le\-vi-Ci\-vi\-ta connections}

\begin{abstract}
We study the natural property of pro\-ject\-a\-bil\-i\-ty of a tor\-sion-free
connection along a foliation on the underlying manifold, which leads to a
projected tor\-sion-free connection on a local leaf space, focusing on
pro\-ject\-a\-bil\-i\-ty of Le\-vi-Ci\-vi\-ta connections of 
pseu\-\hbox{do\hs-}Riem\-ann\-i\-an metric along foliations tangent to null 
parallel distributions. For the neutral metric signature and
mid-di\-men\-sion\-al distributions, Afifi showed in 1954 that
pro\-ject\-a\-bil\-i\-ty of the Le\-vi-Ci\-vi\-ta connection characterizes,
locally, the case of Pat\-ter\-son and Walk\-er's Riemann extension metrics. We
extend this correspondence to null parallel distributions of any dimension,
introducing a suitable generalization of Riemann extensions.
\end{abstract}

\maketitle

\setcounter{section}{0}
\setcounter{theorem}{0}
\renewcommand{\thetheorem}{\Alph{theorem}}
\section*{Introduction}
\setcounter{equation}{0}
Pro\-ject\-a\-ble connections, within the more general
context of transverse geometry for foliations, have been studied by many
authors; see, for instance,
\cite{bednarska,bel'ko, molino,vaisman,zhukova-dolgonosova}. The present paper
focuses on certain special connections and foliations arising in
pseu\-\hbox{do\hs-}Riem\-ann\-i\-an geometry.

Given an integrable distribution $\,\vz\hs$ and a tor\-sion-free connection
$\,\nabla\hs$ on a manifold, $\,\vz\nh$-pro\-ject\-a\-bil\-i\-ty of
$\,\nabla\hs$ onto a connection $\,D\,$ on a local leaf space means that,
whenever local vector field $\,v,w\,$ are $\,\vz\nh$-pro\-ject\-a\-ble, so
must be $\,\ns v\w w$. This is equivalent (Section~\ref{pc}) to requiring the
horizontal distribution of $\,\nabla\hs$ to project onto that of $\,D$.
In the case of a null parallel distribution $\,\vz\hs$ on a
pseu\-\hbox{do\hs-}Riem\-ann\-i\-an manifold with the Le\-vi-Ci\-vi\-ta
connection $\,\nabla\nh$, Walk\-er's theorem \cite{walker} easily leads to a
lo\-cal-co\-or\-di\-nate characterization (Lemma~\ref{nbprj}) of
$\,\vz\nh$-pro\-ject\-a\-bil\-i\-ty of $\,\nabla\nh$.

A less trivial aspect of the latter situation involves the resulting geometric
invariants, not manifestly present in the lo\-cal-co\-or\-di\-nate
description. For the neutral metric signature and 
mid-di\-men\-sion\-al distributions, the underlying manifold $\,\mf$ forms,
locally, a cotangent af\-fine bundle (Section~\ref{ca}) over the leaf space
$\,\bs\,$ (endowed with a tor\-sion-free connection $\,D$) and a result due to
Afifi \cite[p.\ 313]{afifi}, which we reproduce
as Theorem~\ref{afifi}, states that the metric $\,g\,$ is a
Pat\-ter\-son\hs-\nnh Walk\-er Riemann extension for $\,D$.

In Section~\ref{rp} we extend Afifi's theorem to arbitrary indefinite metric
signatures and null
parallel distributions $\,\vz\hs$ of all dimensions. Rather than being just
a cotangent af\-fine bundle, $\,\mf\,$ is now, locally, a bundle of
co\-tan\-gent-prin\-ci\-pal bundles (Section~\ref{ca}) over $\,\bs$, meaning
that there is a bundle $\,\qe\,$ over $\,\bs$, and $\,\mf\,$ itself
constitutes an af\-fine bundle over the total space of $\,\qe\,$ with the
pull\-back of $\,\tab\,$ to $\,\qe$ serving as its associated vector bundle.
The relevant geometric invariants include, in addition to the
tor\-sion-free connection
$\,D\,$ on $\,\bs$, also a vertical metric $\,\vh\,$ on $\,\qe$ (in other
words, a pseu\-\hbox{do\hs-}Riem\-ann\-i\-an fibre metric in the vertical
distribution of the bundle projection $\,\qe\to\bs$, which makes $\,\qe\,$ a
bundle of pseu\-\hbox{do\hs-}Riem\-ann\-i\-an manifolds over $\,\bs$). The
original metric $\,g\,$ now constitutes what we call a {\it Riemann
pull\-back-ex\-ten\-sion\/} for $\,D,\qe\,$ and $\,\vh$.

The mid-di\-men\-sion\-al situation is a special case of this picture, with 
$\,\qe=\bs$, so that the bundle $\,\qe\,$ has sin\-gle-point fibres and
$\,\vh=0$.

\renewcommand{\thetheorem}{\thesection.\arabic{theorem}}
\section{Preliminaries}\label{pr} 
\setcounter{equation}{0}
We always assume $\,C^\infty\nnh$-dif\-fer\-en\-ti\-a\-bil\-i\-ty of
manifolds, mappings (including bundle projections), sub\-bundles (such as
distributions), and tensor fields. Manifolds are by definition connected. 
Maximal connected integral manifolds of foliations
(in\-te\-gra\-ble distributions) are referred to as their {\it leaves}.

Our sign convention for the curvature tensor $\,R\,$ of a connection 
$\,\nabla\hs$ on a manifold $\,\mf\nh$, and any tangent vector fields 
$\,v,u,u'\nh$, is
\begin{equation}\label{cur}
R(v,u)\hh u'\,\,=\,\,\hs\ns u\w\ns v\w u'\,-\,\,\ns v\w\ns u\w u'\,
+\,\,\ns{\,[v,u]}\w u',
\end{equation}
the coordinate form of which reads
$\,R_{ijk}\w{}^l\nh=\partial\hskip-1.5pt_j\w\vg_{\hskip-2.6ptik}^{\hs l}
-\partial\nnh_i\w\vg_{\hskip-2.6ptjk}^{\hs l}
+\vg_{\hskip-2.6ptjp}^{\hs l}\vg_{\hskip-2.6ptik}^{\hs p}
-\vg_{\hskip-2.6ptip}^{\hs l}\vg_{\hskip-2.6ptjk}^{\hs p}$.

Given a mapping $\,\pi:\mf\nh\to\bs\,$ between manifolds, a vector 
field $\,w\,$ (or, a distribution $\,\vz$) on $\,\mf\,$ is said to be 
$\,\pi${\it-pro\-ject\-a\-ble\/} if $\,d\pi\nnh_x\w w_x\w=\hs u_{\pi(x)}\w$,
or
\begin{equation}\label{dpv}
d\pi\nnh_x\w(\vz\nnh_x\w)\,=\,\mathcal{W}\nnh_{\pi(x)}\w\hskip4pt\mathrm{\
for\ all\ }\,x\in\mf
\end{equation}
and some vector field $\,u\,$ (or, respectively, some distribution
$\,\mathcal{W}$) on $\,\bs$.

Let $\,\pi:\mf\nh\to\bs\,$ be a bundle projection with the vertical distribution 
$\,\vz=\,\mathrm{Ker}\hskip2.3ptd\pi$, and let $\,w\,$ be a vector field on
$\,\mf\nh$. Then
\begin{equation}\label{prj}
\begin{array}{l}
w\hs\mathrm{\ is\ }\pi\hyp\mathrm{pro\-ject\-a\-ble\ if\ and\ only\ if,\ 
for\ every\ section\ }v\\
\mathrm{of\ }\hs\vz\nh\mathrm{\,\ the\ Lie\ 
bracket\ }\hs[v,\nh w]\hs\mathrm{\ is\ also\ a\ section\ of\
}\hs\vz\nh\mathrm{,\ or,}\\
\mathrm{equivalently,\ the\ local\ flow\ of\ }\,\hs w\hs\,\mathrm{\ leaves\ 
}\,\hs\vz\,\mathrm{\ invariant.}
\end{array}
\end{equation}
(This is obvious if one uses local 
coordinates for $\,\mf\,$ making $\,\pi\,$ appear as a 
Car\-te\-sian-prod\-uct projection.) Given an integrable distribution
$\,\vz\hs$ 
on a manifold $\,\mf\nh$, 
every point of $\,\mf\,$ has a neighborhood $\,\,U$ such that, for some 
manifold $\,\bs$,
\begin{equation}\label{fbr}
\begin{array}{l}
\mathrm{the\ leaves\ of\ }\,\vz\,\mathrm{\ restricted\ to\ 
}\,\,U\hs\mathrm{\ are\ the}\\
\mathrm{fibres\ of\ a\ bundle\ projection\ }\,\pi\nnh:\nh U\to\bs\hh.
\end{array}
\end{equation}
  
\section{Projectable connections}\label{pc}
\setcounter{equation}{0}
Suppose that $\,\vz\hs$ is an integrable distribution 
on a manifold $\,\mf\nh$. By $\,\vz\nh${\it-pro\-ject\-a\-bil\-i\-ty\/} of a
vector field $\,w\,$ on
an open set $\,\,U'\subseteq\mf\,$ we mean its
$\,\pi$-pro\-ject\-a\-bil\-i\-ty for any $\,\pi,\hs U\nh,\bs\,$ with
(\ref{fbr}) such that $\,\,U\subseteq U'\nh$.

Let $\,\vz\hs$ be an integrable distribution on a manifold $\,\mf\,$
equipped with a tor\-sion-free connection $\,\nabla\nh$. We say that
$\,\nabla\hs$ is $\,\vz\nh${\it-pro\-ject\-a\-ble\/} (or
{\it pro\-ject\-a\-ble along\/} $\,\vz$) if, for any 
$\,\vz\nh$-pro\-ject\-a\-ble vector fields $\,v,w\,$ on an open subset
of $\,\mf\nh$, the covariant derivative $\,\ns v\w w\,$ is 
$\,\vz\nh$-pro\-ject\-a\-ble as well.

For an integrable distribution $\,\vz\hs$ on a manifold $\,\mf\,$ and the
restriction $\,\tlm\,$ of the tangent bundle $\,\tm\,$ to any given leaf
$\,L\,$ of $\,\vz\nh$, the normal bundle of $\,L\hh$ in $\,\mf\,$ is given by 
$\,N\nh L\nh=\hs\tlm\nh/\hs T\nh L$, with the quo\-tient-bun\-dle projection
$\,\tlm\to N\nh L$.
\begin{lemma}\label{prprl}Let a tor\-sion-free connection\/ 
$\,\nabla\hs$ on a manifold\/ $\,\mf\,$ be pro\-ject\-a\-ble along an
integrable distribution\/ $\,\vz\nh$. Then, with\/ $\,L\hh$ denoting any
given leaf of\/ $\,\vz\nh$,
\begin{enumerate}
  \def\theenumi{{\rm\roman{enumi}}}
\item both\/ $\,\ns w\w v,\,\,\ns v\w w\,$ are local sections of\/
$\,\vz\nh$, for any\/ $\,\vz\nh$-pro\-ject\-a\-ble local vector
field\/ $\,w\,$ in\/ $\,\mf\,$ and any local section\/ $\,v\,$ of\/
$\,\vz\hs$ with the same domain,
\item $\vz\,$ is\/ $\,\nabla\nh$-par\-al\-lel, and so, in particular,
 $\,\nabla\hs$ induces a connection in the normal bundle\/ 
$\,N\nh L=\tlm/T\nh L\,$ of\/ $\,L$,
\item  the image under the quo\-tient-bun\-dle projection\/
$\,\tlm\to N\nh L\,$ of any\/ $\,\vz\nh$-pro\-ject\-a\-ble vector field\/
$\,w\,$ on an open set\/ $\,\,U\subseteq\mf\,$ intersecting\/ $\,L\,$ is a
local section of $\,N\nh L$, parallel relative to the connection induced by\/
$\,\nabla\nh$.
\end{enumerate}
\end{lemma}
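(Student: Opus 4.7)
The plan is to work locally, using (\ref{fbr}) to identify $\mathcal{V}$ with the vertical distribution of a bundle projection $\pi:U\to\bs$, and to fix adapted coordinates $(x^a,y^i)$ in which $\pi(x,y)=x$ and $\mathcal{V}$ is spanned by the $\partial/\partial y^i$. I will deduce (ii) and (iii) from (i), and treat (i) first.

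\textbf{Proof of (i).} The key observation to exploit is that any section $v$ of $\mathcal{V}$, and hence $fv$ for every smooth function $f$, is $\mathcal{V}$-pro\-ject\-a\-ble, with projected field $0$. Applying the pro\-ject\-a\-bil\-i\-ty hypothesis on $\nabla$ to the pair $(fv,w)$ then yields $\mathcal{V}$-pro\-ject\-a\-bil\-i\-ty of $\nabla_{fv}w=f\nabla_v w$ for every $f$. In the adapted coordinates, $\mathcal{V}$-pro\-ject\-a\-bil\-i\-ty of a field $Y$ means that its horizontal components $Y^a(x,y)$ depend only on $x$; requiring this of $fY$ for all $f$ --- first $f=1$, then $f=y^1$ --- forces $Y^a\equiv 0$, so $\nabla_v w$ is a section of $\mathcal{V}$. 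The analogous conclusion for $\nabla_w v$ then follows from the tor\-sion-free identity $\nabla_w v=\nabla_v w-[v,w]$ combined with (\ref{prj}), which ensures $[v,w]$ is itself a section of $\mathcal{V}$.

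\textbf{Parts (ii) and (iii).} For (ii), I will use that $(\nabla_u v)(p)$ depends $\mathbb{R}$-linearly on $u(p)$, while $\mathcal{V}$-pro\-ject\-a\-ble local fields --- for example the adapted coordinate fields $\partial/\partial x^a$ and $\partial/\partial y^i$ --- span $T_pM$. Part (i) applied to each member of this spanning set gives verticality of $(\nabla_u v)(p)$ for arbitrary $u$, so $\mathcal{V}$ is $\nabla$-par\-al\-lel and $\nabla$ descends to a connection on $TM/\mathcal{V}$, whose restriction along any leaf $L$ is the normal bundle $NL$. For (iii), the $\mathcal{V}$-pro\-ject\-a\-ble field $w$ descends under $T_LM\to NL$ to a local section $\bar w$ of $NL$ over $L\cap U$. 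Parallelism reduces to checking $\nabla^{NL}_\xi\bar w=0$ for every $\xi\in T_pL=\mathcal{V}_p$; extending $\xi$ to a local section $V$ of $\mathcal{V}$ gives $\nabla^{NL}_\xi\bar w=[\nabla_V w]_p$, which vanishes in $NL$ because $\nabla_V w$ is vertical by (i).

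\textbf{Main obstacle.} The one nonformal step is in (i) --- passing from $\mathcal{V}$-pro\-ject\-a\-bil\-i\-ty of $f\nabla_v w$ for all smooth $f$ to the actual verticality of $\nabla_v w$. The leverage I expect to need is precisely the freedom to vary $f$ along the fibres of $\pi$, not merely among functions pulled back from $\bs$.
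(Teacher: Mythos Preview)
Your proof is correct and follows the same line as the paper's: the heart of (i) is that $fv$ is $\mathcal{V}$-projectable for every smooth $f$, so $\nabla_{fv}w=f\nabla_vw$ must be projectable for all $f$, which forces the projected image of $\nabla_vw$ to vanish; the other covariant derivative then follows from torsion-freeness and (\ref{prj}). The paper phrases the key step slightly differently---``$\nabla_vw$ projects onto $0$, or else, multiplied by a non-projectable function, it would cease to be projectable''---while you make the same point concretely via the adapted coordinates and the test functions $f=1,\,y^1$; your explicit unpacking of (ii) and (iii) is a welcome expansion of what the paper leaves implicit.
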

\begin{proof}Given $\,w,v\,$ as in (i),
$\,\ns v\w w\,$ is $\,\vz\nh$-pro\-ject\-a\-ble (as $\,v\,$ is). This remains
the case after $\,v\,$ has been multiplied by any function. Thus, 
$\,\ns v\w w\,$ projects onto $\,0\,$ (or else, multiplied by a 
non-pro\-ject\-a\-ble function, it would cease to be
$\,\vz\nh$-pro\-ject\-a\-ble); in other words, $\,\ns v\w w\,$ is a section
of $\,\vz\nh$. Due to (\ref{prj}), $\,\ns w\w v\,$ differs from
$\,\ns v\w w\,$ by the section $\,[w,v]\,$ of $\,\vz\nh$, and (i) follows,
also proving (ii) -- (iii).
\end{proof}
\begin{lemma}\label{prjcn}Whenever a tor\-sion-free connection\/ 
$\,\nabla\hs$ on a manifold\/ $\,\mf\,$ is pro\-ject\-a\-ble along an
integrable distribution\/ $\,\vz\nh$, it gives rise to a 
tor\-sion-free connection\/ $\,D\,$ on each local leaf space\/ $\,\bs\,$
with\/ {\rm(\ref{fbr})}, characterized by\/ $\,D\nnh_v\w\hn w=\ns v\w w\,$
for\/ $\,\vz\nh$-pro\-ject\-a\-ble vector fields\/ $\,v,w\,$ on the open set\/ 
$\,\,U\subseteq\mf\,$ appearing in\/ {\rm(\ref{fbr})}, where the same
symbols\/ $\,v,w\,$ denote their projections onto\/ $\,\bs$.
\end{lemma}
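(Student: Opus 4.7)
The plan is to construct $\,D\,$ on each local leaf space $\,\bs\,$ as in (\ref{fbr}) by the three-step procedure of lifting, differentiating, and projecting: given vector fields $\,\bar v,\bar w\,$ on $\,\bs\nh$, I would choose $\,\vz\nh$-pro\-ject\-a\-ble lifts $\,v,w\,$ on the open set $\,\,U\,$ (such lifts exist locally via a trivializing chart for $\,\pi\,$ making it a Car\-te\-sian-prod\-uct projection), observe that $\,\ns v\w w\,$ is $\,\vz\nh$-pro\-ject\-a\-ble by the hypothesis on $\,\nabla\nh$, and declare $\,D\nnh_{\bar v}\w\bar w\,$ to be its $\,\pi$-pro\-jec\-tion. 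Since every vector field on $\,\bs\,$ admits such lifts, this determines $\,D\,$ uniquely and so also establishes the characterization asserted in the lemma.

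The only substantive step is well-def\-i\-ni\-tion: that the pro\-jec\-tion is independent of the choice of lifts. If $\,v'\nh$ is another $\,\vz\nh$-pro\-ject\-a\-ble lift of $\,\bar v$, then $\,v-v'\nh$ is a local section of $\,\vz\nh$, and Lemma~\ref{prprl}(i), applied to the $\,\vz\nh$-pro\-ject\-a\-ble field $\,w\,$ together with the section $\,v-v'\nh$ of $\,\vz\nh$, gives that $\,\ns{v-v'}\w w\,$ is itself a section of $\,\vz\nh$, hence projects to $\,0$. The analogous argument with the roles of $\,v\,$ and $\,w\,$ interchanged handles the ambiguity in the lift of $\,\bar w$. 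This is the main obstacle and the place where the pro\-ject\-a\-bil\-i\-ty hypothesis on $\,\nabla\,$ is essentially used through Lemma~\ref{prprl}(i).

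The remaining checks are routine. For $\,f\in C^\infty(\bs)$, the composite $\,f\circ\pi\,$ is $\,\vz\nh$-pro\-ject\-a\-ble, and $\,(f\circ\pi)v\,$ is a $\,\vz\nh$-pro\-ject\-a\-ble lift of $\,f\bar v$; from $\,\ns{(f\circ\pi)v}\w w=(f\circ\pi)\ns v\w w\,$ one reads off $\,C^\infty(\bs)$-linearity of $\,D\,$ in its lower argument. The Leibniz identity for $\,D\,$ in the upper argument follows from the Leibniz rule for $\,\nabla\,$ combined with the equality $\,v(f\circ\pi)=(\bar v\nh f)\circ\pi$, which holds because $\,v\,$ is $\,\pi$-pro\-ject\-a\-ble. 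Finally, tor\-sion-free\-ness is inherited: $\,\ns v\w w-\ns w\w v=[v,\nh w]$, and since $\,v,w\,$ are $\,\pi$-pro\-ject\-a\-ble so is $\,[v,\nh w]$, with pro\-jec\-tion $\,[\bar v,\bar w]$. The defining formula $\,D\nnh_v\w\hn w=\ns v\w w\,$ of the lemma then holds by construction once each lift is identified with its pro\-jec\-tion.
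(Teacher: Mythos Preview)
Your argument is correct and follows the same route as the paper: the paper's proof also reduces everything to well\hh-def\-i\-ni\-tion and invokes Lemma~\ref{prprl}(i) to see that changing the lifts by sections of $\,\vz\,$ alters $\,\ns v\w w\,$ only by a section of $\,\vz\nh$. Your additional checks of the connection axioms and tor\-sion\hh-free\-ness are routine and left implicit in the paper.
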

In fact, $\,D\,$ is well defined: replacing $\,v,w\,$ by other vector fields
on $\,\,U\,$ having the same projections onto $\,\bs$, that is, adding to
them local sections of $\,\vz\nh$, results -- by Lemma~\ref{prprl}(i) --
only in adding to $\,\ns v\w w\,$ a local section of $\,\vz\nh$, without
changing the projection of $\,\ns v\w w\,$ onto $\,\bs$.
\begin{lemma}\label{lccoo}In local coordinates\/
$\,x\hh^1\nh,\dots,x\hh^n\nh$, let the\/ $\,\q$-di\-men\-sion\-al 
distribution\/ $\,\vz\hs$ be spanned by the last\/ $\,\q\,$ coordinate 
vector fields. Then a tor\-sion-free connection\/ $\,\nabla\hs$ is\/
$\,\vz\nh$-pro\-ject\-a\-ble if and only if
its component functions satisfy the relations\/
$\,\vg_{\hskip-2.6ptaj}^{\hs i}\nh
=\nh\vg_{\hskip-2.6ptab}^{\hs i}\nh
=\nh\partial\nnh_a\w\vg_{\hskip-2.6ptjk}^{\hs i}\nh=0\,$ for all\/ 
$\,i,j,k\in\{1,\dots,n-\q\}\,$ and\/ $\,a,b\in\{n-\q+1,\dots,n\}$.

In the case of\/ $\,\vz\nh$-pro\-ject\-a\-bil\-i\-ty of\/ $\,\nabla\nh$,
the projected tor\-sion-free connection\/ $\hs D\hs$ on a local leaf space\/ 
$\,\bs\,$ with the coordinates\/ $\,x\hh^i$ for\/
$\,i=1,\dots,n-\q$, cf.\ Lemma\/~{\rm\ref{prjcn}}, has the component
functions\/ $\,\vg_{\hskip-2.6ptjk}^{\hs i}$.
\end{lemma}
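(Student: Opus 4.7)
The plan is to reduce $\,\vz\nh$-pro\-ject\-a\-bil\-i\-ty of $\,\nabla\hs$ to
testing on the coordinate vector fields $\,\partial_i\,$ (with
$\,i\le n-\q$) and $\,\partial_a\,$ (with $\,a>n-\q$). In the given
coordinates, a bundle projection realizing (\ref{fbr}) may be chosen as the
Car\-te\-sian-prod\-uct projection
$\,\pi:(x^1\nh,\dots,x^n)\mapsto(x^1\nh,\dots,x^{n-\q})$, so a vector field
$\,w=w^i\partial_i+w^a\partial_a\,$ is $\,\vz\nh$-pro\-ject\-a\-ble precisely
when each $\,w^i$ with $\,i\le n-\q\,$ depends only on
$\,x^1\nh,\dots,x^{n-\q}$. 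In particular, all the coordinate fields
$\,\partial_i\,$ and $\,\partial_a\,$ qualify.

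For the necessity, I would apply Lemma~\ref{prprl}(i) with vertical
coordinate fields serving as the sections of $\,\vz\nh$. The pair
$\,(\partial_a,\partial_j)\,$ forces $\,\ns{\partial_a}\partial_j\,$ to be a
section of $\,\vz$, hence $\,\vg_{\hskip-2.6ptaj}^{\hs i}=0\,$ for horizontal
$\,i$; the pair $\,(\partial_a,\partial_b)\,$ yields
$\,\vg_{\hskip-2.6ptab}^{\hs i}=0\,$ in the same way. Next, since
$\,\ns{\partial_j}\partial_k\,$ is itself $\,\vz\nh$-pro\-ject\-a\-ble, its
horizontal coefficients $\,\vg_{\hskip-2.6ptjk}^{\hs i}$ must be independent
of the $\,x^a$, giving $\,\partial_a\vg_{\hskip-2.6ptjk}^{\hs i}=0$.

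For sufficiency, I would expand $\,(\ns v\w w)^l\,$ for arbitrary
$\,\vz\nh$-pro\-ject\-a\-ble $\,v,w\,$ and horizontal $\,l$. The vanishing of
$\,\vg_{\hskip-2.6ptaj}^{\hs i}\,$ and $\,\vg_{\hskip-2.6ptab}^{\hs i}\,$ kills
every summand carrying a vertical index on $\,v\,$ or $\,w$, while
$\,\partial_a\hs v^k=\partial_a\hs w^j=\partial_a\hs w^l=0\,$ for horizontal
$\,j,k,l\,$ (by projectability of $\,v,w$) together with
$\,\partial_a\vg_{\hskip-2.6ptjk}^{\hs l}=0\,$ removes all $\,x^a$-de\-pen\-dence
from the surviving terms. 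Thus $\,(\ns v\w w)^l\,$ is a function of
$\,x^1\nh,\dots,x^{n-\q}$ alone, confirming $\,\vz\nh$-pro\-ject\-a\-bil\-i\-ty
of $\,\ns v\w w$.

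The final clause is then immediate from Lemma~\ref{prjcn}: the projection of
$\,\ns{\partial_j}\partial_k\,$ onto $\,\bs\,$ is its horizontal part
$\,\vg_{\hskip-2.6ptjk}^{\hs i}\partial_i$, and the relation
$\,\partial_a\vg_{\hskip-2.6ptjk}^{\hs i}=0\,$ lets the coefficients descend to
$\,\bs$. I do not anticipate a substantive obstacle here; the only care
required is the disciplined separation of horizontal indices $\,i,j,k\,$ from
vertical ones $\,a,b\,$ throughout the coordinate expansions, and a clean
invocation of Lemma~\ref{prprl}(i) to handle the mixed Christoffel
components.
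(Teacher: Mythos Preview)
Your proposal is correct and follows essentially the same route as the paper's proof: both obtain $\vg_{\hskip-2.6ptaj}^{\hs i}=\vg_{\hskip-2.6ptab}^{\hs i}=0$ from Lemma~\ref{prprl} (you cite part (i), the paper cites the equivalent part (ii)), then read off $\partial\nnh_a\vg_{\hskip-2.6ptjk}^{\hs i}=0$ from pro\-ject\-a\-bil\-i\-ty of $\ns{\partial_j}\partial_k$, and for sufficiency both expand $(\ns v\w w)^l$ in coordinates and verify the horizontal components lose all $x^a$-de\-pen\-dence. The only cosmetic difference is that the paper packages the sufficiency step by first noting $\vz$ is $\nabla$-par\-al\-lel and splitting off a $\vz$-section, whereas you expand term by term; the computations coincide.
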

\begin{proof}First, let $\,\nabla\hs$ be $\,\vz\nh$-pro\-ject\-a\-ble.
By Lemma~\ref{prprl}(ii), $\,\vz\,$ is\/ $\,\nabla\nh$-par\-al\-lel, so that
$\,\vg_{\hskip-2.6ptaj}^{\hs i}\nh
=\nh\vg_{\hskip-2.6ptab}^{\hs i}\nh=0$, while
$\,\vz\nh$-pro\-ject\-a\-bil\-i\-ty of the coordinate vector fields
$\,\partial\nnh_i\w$ implies the same for
$\,\ns{\partial\nnh_j}\w\nh\partial\nnh_k\w
=\vg_{\hskip-2.6ptjk}^{\hs i}\partial\nnh_i\w
+\vg_{\hskip-2.6ptjk}^{\hs a}\partial\nnh_a\w$, that is,
for $\,\vg_{\hskip-2.6ptjk}^{\hs i}\partial\nnh_i\w$ (the last term
being a local section $\,v\,$ of $\,\vz\nh$, and hence
$\,\vz\nh$-pro\-ject\-a\-ble). The functions
$\,\vg_{\hskip-2.6ptjk}^{\hs i}$ are thus constant along $\,\vz\nh$, 
and so $\,\partial\nnh_a\w\vg_{\hskip-2.6ptjk}^{\hs i}\nh=0$.

Assume now that $\,\vg_{\hskip-2.6ptaj}^{\hs i}\nh
=\nh\vg_{\hskip-2.6ptab}^{\hs i}\nh
=\nh\partial\nnh_a\w\vg_{\hskip-2.6ptjk}^{\hs i}\nh=0\,$ with 
index ranges as above. Due to $\,\vz\nh$-pro\-ject\-a\-bil\-i\-ty of the
coordinate vector fields, any $\,\vz\nh$-pro\-ject\-a\-ble local vector field
has the form $\,w=w^i\partial\nnh_i\w+w^a\partial\nnh_a\w$ with
$\,\partial\nnh_a\w w^i\nh=0$, so that, for two such vector fields
$\,w,u$, since $\,\vz\hs$ is clearly $\,\nabla\nh$-par\-al\-lel, 
$\,\ns u\w w\,$ equals a local section of $\,\vz\hs$ plus
$\,\psi^i\partial\nnh_i\w$, with 
$\,\psi^i\nh=u^j\partial\nh_j\w w^i\nh
+u^jw^k\vg_{\hskip-2.6ptjk}^{\hs i}$. Thus,
$\,\partial\nnh_a\w\psi^i\nh=0$, and $\,\ns u\w w\,$ is
$\,\vz\nh$-pro\-ject\-a\-ble.
\end{proof}
As noted in the above proof, in local coordinates chosen for
an integrable distribution $\,\vz\hs$ as in Lemma~\ref{lccoo}, the requirement
that $\,\vz\hs$ be $\,\nabla\nh$-par\-al\-lel amounts to
\begin{equation}\label{nbp}
\vg_{\hskip-2.6ptaj}^{\hs i}\,
=\,\hs\vg_{\hskip-2.6ptab}^{\hs i}\,=\,\hs0\hh.
\end{equation}
Pro\-ject\-a\-bil\-i\-ty of a tor\-sion-free connection $\,\nabla\hs$ on a 
manifold $\,\mf\,$ along a $\,\nabla\nh$-par\-al\-lel distribution $\,\vz\hs$
{\it is equivalent to the following condition imposed on the curvature tensor\/
$\,R\,$ of\/ $\,\nabla\nh$, at every point\/} $\,x\in\mf$:
\begin{equation}\label{crc}
\begin{array}{l}
R\hn_x\w(v,u)\hh u'\nh\in\hh\vx\,\mathrm{\ whenever\ }\,u,u'\in\txm\,\mathrm{\
and\ }\,v\in\vx.
\end{array}
\end{equation}
In fact, we may choose local coordinates for $\,\vz\hs$ as in
Lemma~\ref{lccoo}. Condition (\ref{crc}) reads
$\,R_{a\hs\bullet\bullet}\w{}^i\nh=0\,$ with
$\,\bullet\,$ standing for any index, that is, 
$\,R_{ajk}\w{}^i\nh=R_{ajb}\w{}^i\nh=R_{abj}\w{}^i\nh
=R_{abc}\w{}^i\nh=0$. If $\,\nabla\hs$ is
$\,\vz\nh$-pro\-ject\-a\-ble, Lemma~\ref{lccoo} gives (\ref{nbp}) and 
$\,\partial\nnh_a\w\vg_{\hskip-2.6ptjk}^{\hs i}\nh=0$,
which yields $\,R_{a\hs\bullet\bullet}\w{}^i\nh=0\,$ due to the
coordinate form of (\ref{cur}). Conversely, if (\ref{nbp}) holds and 
$\,R_{i\hs\bullet\bullet}\w{}^a\nh=0$, the coordinate form of (\ref{cur})
yields $\,\partial\nnh_a\w\vg_{\hskip-2.6ptjk}^{\hs i}\nh
=-\nh R_{ajk}\w{}^i\nh=\hs0$.

Given a bundle projection $\,\pi:M\to\bs\,$ with the vertical distribution
$\,\vz\nh=\text{\rm Ker}\hskip1.7ptd\pi\,$ and tor\-sion-free connections 
$\,\nabla\hs$ on $\,\mf\,$ and $\,D\,$ on $\,\bs$, the following three
conditions are mutually equivalent:
\begin{enumerate}
  \def\theenumi{{\rm\alph{enumi}}}
\item $\nabla\hs$ is $\,\vz\nh$-pro\-ject\-a\-ble onto $\,\bs\,$ with the
projected connection $\,D$,
\item the horizontal distribution of $\,\nabla\hs$ is
$\,d\pi$-pro\-ject\-a\-ble onto that of the $\,\pi$-pull\-back of $\,D$, the
vec\-tor-bun\-dle morphism $\,d\pi:T\nh\mf\nh\to\pi^*T\hn\bs\,$ being treated
as a mapping between the total spaces,
\item whenever $\,t\mapsto w(t)\in T\hskip-3.7pt_{x(t)}\w\hs\mf\,$ is a
$\,\nabla\nh$-par\-al\-lel vector field along a curve
$\,t\mapsto x(t)\in\mf\nh$, its $\,d\pi$-im\-a\-ge
$\,t\mapsto d\pi\nnh_{x(t)}\w w(t)\in T\hskip-2.8pt_{y(t)}\w\hh\bs\,$ is
$\,D$-par\-al\-lel along the image curve $\,t\mapsto y(t)=\pi(x(t))$.
\end{enumerate}
In fact, (b) and (c) imply each other: the left-to-right inclusion in
(\ref{dpv}) for $\,d\pi\,$ rather than $\,\pi\,$ trivially follows from (c). 
For the opposite inclusion, in suitable local coordinates,
$\,\pi:M\to\bs\,$ (and, consequently, $\,d\pi:T\nh\mf\nh\to\pi^*T\hn\bs$) 
appears as a linear projection $\,(y,\xi)\mapsto y\,$ (or, respectively,
$\,(y,\xi,\dot y,\dot\xi)\mapsto(y,\xi,\dot y)$), which realizes
any $\,D$-hor\-i\-zon\-tal vector as the image of a vector tangent to
$\,T\nh\mf\nh$, while a $\,T\nh\mf\nh$-ver\-ti\-cal correction allows us to
replace the latter with a $\,\nabla\nh$-hor\-i\-zon\-tal one.

To establish equivalence of (a) and (c), use local coordinates 
$\,x\hh^1\nh,\dots,x\hh^n$ in $\,\mf$ in which $\,\vz\,$ is spanned by the
last $\,\q\,$ coordinate vector fields, and so, with the index ranges
$\,i,j,k\in\{1,\dots,n-\q\}\,$ and $\,a,b\in\{n-\q+1,\dots,n\}$, we may treat
$\,x^{\hh i}$ as coordinates in $\,\bs$, denoting by
$\,\bvg_{\hskip-2.6ptjk}^{\hs i}$ the component functions of $\,D\,$ and
reserving the usual $\,\vg\hs$ notation for those of $\,\nabla\nh$. Condition
(c) now amounts to requiring that
$\,\dot w^{\hh i}\nh+\bvg_{\hskip-2.6ptjk}^{\hs i}\dot x^{\hh j}w^{\hh k}\nh
=0\,$ whenever
$\,\dot w^{\hh i}\nh+\vg_{\hskip-2.6ptjk}^{\hs i}\dot x^{\hh j}w^{\hh k}\nh
+\vg_{\hskip-2.6ptja}^{\hs i}\dot x^{\hh j}w^{\hh a}\nh
+\vg_{\hskip-2.6ptak}^{\hs i}\dot x^{\hh a}w^{\hh k}\nh
+\vg_{\hskip-2.6ptab}^{\hs i}\dot x^{\hh a}w^{\hh b}\nh=0$. Choosing
appropriate initial data we see that this amounts to 
$\,\vg_{\hskip-2.6ptaj}^{\hs i}\nh
=\nh\vg_{\hskip-2.6ptab}^{\hs i}\nh=0\,$ 
and $\,\vg_{\hskip-2.6ptjk}^{\hs i}\nh=\bvg_{\hskip-2.6ptjk}^{\hs i}\nh$. As
the last relation yields
$\,\partial\nnh_a\w\vg_{\hskip-2.6ptjk}^{\hs i}\nh=0$, our claim follows from 
Lemma~\ref{lccoo}.

\section{Null-pro\-ject\-a\-bil\-i\-ty of Le\-vi-Ci\-vi\-ta
connections}\label{np}
\setcounter{equation}{0}
For the Le\-vi-Ci\-vi\-ta connection $\,\nabla\hs$ of a
pseu\-\hbox{do\hs-}Riem\-ann\-i\-an metric on a manifold $\,\mf\,$ and 
an integrable distribution $\,\dz\hh$ on $\,\mf\nh$,
\begin{equation}\label{oco}
\dz\hyp\mathrm{pro\-ject\-a\-bil\-i\-ty\ of\ }\,\nabla\hs\mathrm{\ is\ 
equivalent\ to\ its\ }\,\dz\hh^\perp\nnh\hyp\mathrm{pro\-ject\-a\-bil\-i\-ty.}
\end{equation}
(Note that pro\-ject\-a\-bil\-i\-ty implies, by Lemma~\ref{prprl}(ii), that
the distribution in question is parallel; thus, $\,\dz\hh^\perp$ must be
integrable here if $\,\dz\hh$ is, and vice versa.) In fact, in terms of the 
$\,(0,4)\,$ curvature tensor, also denoted by $\,R$, (\ref{crc}) 
reads $\,R(\dz,\,\cdot\,,\,\cdot\,\dz\hh^\perp\nnh)=\{0\}$ which, due to
symmetries of $\,R$, amounts to 
$\,R(\dz\hh^\perp\nnh,\,\cdot\,,\,\cdot\,\dz)=\{0\}$, that is, to (\ref{crc})
with $\,\dz\hh$ replaced by $\,\dz\hh^\perp\nnh$.

This situation is of rather little interest when $\,\dz\hh$ is nondegenerate 
(meaning nondegeneracy of the metric restricted to $\,\dz$) since, the local
version of the de Rham decomposition theorem, originally due to
Thomas \cite{thomas}, then implies that $\,\dz$ is, locally, a factor
distribution in a product decomposition of the metric, and so $\,\nabla$
projects via (\ref{dpv}) onto the Le\-vi-Ci\-vi\-ta connection of the other
factor metric. The extreme opposite case, in which $\,\dz\hh$ is null, leads
to a much more diverse family of examples, such as those listed below.
\begin{example}\label{nlpar}Any nonzero null parallel vector field $\,w\,$ on
a pseu\-\hbox{do\hskip.7pt-}Riem\-ann\-i\-an manifold has
$\,R(w,\,\cdot\,,\,\cdot\,,\,\cdot)=0$, which implies (\ref{crc}) for the
distribution $\,\dz\hh$ spanned by $\,w$, and hence
$\,\dz\nh$-pro\-ject\-a\-bil\-i\-ty of the Le\-vi-Ci\-vi\-ta connection.
\end{example}
\begin{example}\label{ecsmf}{\it ECS manifolds\/} \cite{derdzinski-roter-padge}
are pseu\-\hbox{do\hskip.7pt-}Riem\-ann\-i\-an manifolds of dimensions
$\,n\ge4\,$ which have parallel Weyl tensor withous being con\-for\-mal\-ly
flat or locally symmetric. They exist for every $\,n\ge4\,$ as shown by Roter 
\cite[Corol\-lary~3]{roter}, their metrics are all indefinite 
\cite[Theorem~2]{derdzinski-roter-77}, and compact examples are known in all
dimensions $\,n\ge5\,$ \cite{derdzinski-roter-10,derdzinski-terek}. Every
ECS manifold carries a distinguished null parallel distribution $\,\dz\hh$ of
dimension $\,d\in\{1,2\}$, discovered by Ol\-szak \cite{olszak}, and its
Le\-vi-Ci\-vi\-ta connection is $\,\dz$-pro\-ject\-a\-ble: for $\,d=2$,
\cite[Lemma 17.3(ii)]{derdzinski-roter-tmj} yields (\ref{crc}) while, if
$\,d=1$, the metric has, locally, according to
\cite[Theorem 4.1]{derdzinski-roter-09}, the coordinate form of
\cite[formula (3.2)]{derdzinski-terek} with $\,\dz\hh$ spanned by a null
parallel vector field $\,w\,$
\cite[lines following formula (3.6)]{derdzinski-terek}, and we can invoke
Example~\ref{nlpar}.
\end{example}
\begin{example}\label{rmext}In a cotangent af\-fine bundle over a manifold 
carrying a tor\-sion-free connection, equipped with any Riemann extension
metric, the vertical distribution $\,\vz\hs$ is null and parallel, and the 
Le\-vi-Ci\-vi\-ta connection is $\,\vz\nh$-pro\-ject\-a\-ble. See
Section~\ref{re} below, especially Theorem~\ref{afifi}.
\end{example}
Let us call the Le\-vi-Ci\-vi\-ta connection of an $\,n$-di\-men\-sion\-al 
pseu\-\hbox{do\hs-}Riem\-ann\-i\-an manifold $\,(\mf\nh,g)\,$ {\it
null-pro\-ject\-a\-ble\/} if it is pro\-ject\-a\-ble along some null parallel
distribution $\,\dz\hh$ of dimension $\,r$, with $\,0<r<n$. We will use the
index ranges 
\begin{equation}\label{rin}
1\,\le\,i,j,k\,\le\,r\,<\,p,q\,\le\,n-r\,<\,a,b\,\le\,n\mathrm{,\ where\
}\,n=\dim\mf.
\end{equation}
As shown by Walk\-er \cite{walker}, a null parallel distribution $\,\dz\hh$ 
of dimension $\,r$ on an $\,n$-di\-men\-sion\-al 
pseu\-\hbox{do\hs-}Riem\-ann\-i\-an manifold is, in some local
coordinates, spanned by the last $\,r\hs$ coordinate vector fields, while,
for the components of the metric $\,g$,
\begin{equation}\label{gab}
\begin{array}{l}
g_{ab}\w=\,g_{ap}\w=\,0\hh,\hskip20pt\det[g_{ia}\w]\ne0\ne\det[g_{pq}\w]\hh,\\
\partial\nnh_a\w g_{ia}\w\nh=\hs\partial\nnh_p\w g_{ia}\w\nh
=\hs\partial\nnh_j\w g_{ia}\w\nh=\hs\partial\nnh_a\w g_{pq}\w\nh
=\hs\partial\nnh_a\w g_{pi}\w\nh=\,0\hh,
\end{array}
\end{equation}
$\det[g_{ia}\w]\ne0\ne\det[g_{pq}\w]\,$ reflecting nondegeneracy of $\,g$.
Conversely, (\ref{gab}) with (\ref{rin}) always defines a metric $\,g\,$ for 
which the span $\,\dz\hh$ of $\,\partial\nnh_{n-r+1}\w,\dots,\partial\nnh_n\w$
is null and parallel. Note that $\,[g_{ia}\w]\,$ is here a nonsingular
$\,r\times r\hs$
matrix of constants (and may always be assumed equal to the identity matrix).
\begin{lemma}\label{gmijk}For the Le\-vi-Ci\-vi\-ta connection of a metric\/
$\,g\,$ satisfying\/ {\rm(\ref{gab})} with\/ {\rm(\ref{rin})}, or even the 
weaker assumption that\/ 
$\,g_{ab}\w=g_{ap}\w=\partial\nnh_j\w g_{ia}\w=0$, one has\/
$\,2\vg_{\hskip-2.6ptjk}^{\hs i}\nh
=-g^{\hs ai}\partial\nnh_a\w g\nh_{jk}\w$, where the matrix\/
$\,[g^{\hs ai}]\,$ is the inverse of\/ $\,[g_{ia}\w]$.
\end{lemma}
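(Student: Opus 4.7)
The plan is a direct application of the Koszul/Christoffel formula once the block structure of the inverse metric has been sorted out. I first need to identify which components of $\,g^{\alpha\beta}\,$ vanish, then feed this into the standard expression for the Christoffel symbols.

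\medskip
\noindent\textbf{Step 1: Block structure of the inverse.} Split the index range into three blocks using \eqref{rin}: the $\,i$-block, the $\,p$-block, and the $\,a$-block, each consisting respectively of $\,r$, $\,n-2r$, and $\,r\,$ indices. The assumptions $\,g_{ab}=g_{ap}=0\,$ mean that the matrix $\,[g_{\alpha\beta}\w]\,$ has zeros in the $(A,A)$ and $(A,P)$ (hence also $(P,A)$) blocks. Writing out the equations $\,g_{\alpha\gamma}\w g^{\gamma\beta}\nh=\delta_\alpha^{\hs\beta}\,$ block by block and using the invertibility of $\,[g_{ia}\w]\,$ yields, in succession, $\,g^{\hs ij}\nh=0$, $\,g^{\hs ip}\nh=0$, and $\,[g^{\hs ia}]\,$ equals the inverse matrix of $\,[g_{ia}\w]$. (Concretely, the $(A,I)$ and $(A,P)$ rows of the equation involve only $\,g_{ia}\w g^{\hs ij}\nh=0\,$ and $\,g_{ia}\w g^{\hs ip}\nh=0$, while the $(A,A)$ row gives $\,g_{ia}\w g^{\hs ib}\nh=\delta_a^{\hs b}$.)

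\medskip
\noindent\textbf{Step 2: Christoffel computation.} Plugging the standard formula
\[
\vg_{\hskip-2.6ptjk}^{\hs i}\,=\,\tfrac12\hs g^{\hs i\hs\bullet}\bigl(\partial\nnh_j\w g\nh_{k\bullet}\w\nh+\partial\nnh_k\w g\nh_{j\bullet}\w\nh-\partial\nh_\bullet\w g\nh_{jk}\w\bigr)
\]
into Step 1, the sum over $\,\bullet\,$ collapses to a sum over the $\,a$-block alone, since $\,g^{\hs ij}\nh=g^{\hs ip}\nh=0$. The assumption $\,\partial\nnh_j\w g\nh_{ia}\w\nh=0\,$ then kills the first two summands, leaving
\[
\vg_{\hskip-2.6ptjk}^{\hs i}\,=\,\tfrac12\hs g^{\hs ia}\bigl(-\partial\nh_a\w g\nh_{jk}\w\bigr)\hh,
\]
which is the claimed identity (after symmetrizing $\,g^{\hs ia}\nh=g^{\hs ai}$).

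\medskip
\noindent\textbf{Main obstacle.} There is no real obstacle; the only place where one has to be careful is in Step 1, to ensure that the partial inverse relations genuinely force $\,g^{\hs ij}\nh=g^{\hs ip}\nh=0\,$ without invoking the full list of conditions in \eqref{gab}. Once this is done, the identity drops out of the Koszul formula after a one-line calculation. It is worth noting explicitly in the write-up that only the three hypotheses $\,g_{ab}\w=g_{ap}\w=\partial\nnh_j\w g\nh_{ia}\w\nh=0\,$ are used, so that the lemma is stated under the weaker assumption -- a fact which will be useful when the lemma is reapplied in settings where the full Walker normal form \eqref{gab} has not yet been established.
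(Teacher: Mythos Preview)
Your proposal is correct and follows essentially the same route as the paper: first use the block equations $\,g_{a\gamma}\w g^{\gamma\beta}\nh=\delta_a^{\hs\beta}\,$ (the paper uses the transposed version $\,g^{\beta\gamma}g_{\gamma a}\w=\delta_a^{\hs\beta}$) together with $\,g_{ab}\w=g_{ap}\w=0\,$ and $\,\det[g_{ia}\w]\ne0\,$ to obtain $\,g^{\hs ij}\nh=g^{\hs ip}\nh=0$, then feed this into the Christoffel formula and let $\,\partial\nnh_j\w g_{ia}\w=0\,$ kill the remaining terms. The paper compresses all of this into two lines, but the content is identical.
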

In fact, $\,0=\delta_a^j=g\hh^{ji}g_{ia}\w$ and
$\,0=\delta_a^p=g\hh^{pi}g_{ia}\w$, so that $\,g\hh^{ji}\nh=g\hh^{pi}\nh=0\,$
since $\,\det[g_{ia}\w]\ne0$, and our claim follows.
\begin{lemma}\label{nbprj}The Le\-vi-Ci\-vi\-ta connection of a metric\/
$\,g\,$ as in\/ {\rm(\ref{gab})}  with\/ {\rm(\ref{rin})} is pro\-ject\-a\-ble
along the distribution\/
$\,\dz\hh$ spanned by the last\/ $\,r$ coordinate vector fields if and only
if\/ $\,\partial\nnh_a\w\partial\nh_b\w\hs g\nh_{jk}\w
=\partial\nnh_a\w\partial\nnh_p\w\hs g\nh_{jk}\w=0\,$ or, equivalently,
\begin{equation}\label{eqv}
g\nh_{jk}\w=x^{\hh a}\nnh B\nnh_{ajk}\w\nnh+\stf\hn_{jk}\w\mathrm{\ for\
some\ }\,B_{ajk}\w,\stf\hn_{jk}\w\mathrm{\ with\
}\,\partial\nh_b\w B\nnh_{ajk}\w=\partial\nnh_p\w B\nnh_{ajk}\w
=\partial\nnh_a\w \stf\hn_{jk}\w=0\hh.
\end{equation}
\end{lemma}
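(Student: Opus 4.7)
The plan is to apply Lemma~\ref{lccoo} with $\,q=r$: since the conditions (\ref{nbp}) encoding parallelism of $\,\dz\hs$ are already implicit in (\ref{gab}) via Walker's normal form, pro\-ject\-a\-bil\-i\-ty of $\,\nabla\hs$ along $\,\dz\hs$ reduces to requiring $\,\partial\nnh_a\w\vg_{BC}^{\hs D}=0\,$ for all horizontal index triples $\,B,C,D\in\{1,\dots,n-r\}$, where the horizontal range is the union of the $\,i$-\hs and $\,p$-ranges in (\ref{rin}).

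My first step is to pin down the block structure of the inverse metric $\,g\hh^{AB}\nh$. Combining the relations $\,g_{ab}\w=g_{ap}\w=0\,$ with $\,g\hh^{AB}g_{BC}\w=\delta^A_C$, and recalling that $\,g_{ia}\w$ is constant (hence nonsingular) while $\,g_{pq}\w$ and $\,g_{pi}\w$ are independent of the $\,x^{\hh a}\nh$, one finds $\,g\hh^{ij}=g\hh^{ip}=g\hh^{pj}=0$, with $\,[g\hh^{ia}]\,$ the constant inverse of $\,[g_{ja}\w]$, $\,[g\hh^{pq}]\,$ the inverse of $\,[g_{pq}\w]$, and $\,g\hh^{pa}=-\hh g\hh^{pq}g_{qj}\w g\hh^{ja}\nh$. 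In particular, all of $\,g\hh^{ia},g\hh^{pq},g\hh^{pa}\,$ are independent of every $\,x^{\hh a}\nh$.

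Next I compute the purely horizontal Christoffel symbols. The standard formula, together with the constancy of $\,g_{ja}\w\nh$, the vanishing of $\,g_{pa}\w\nh$, and the $\,x^{\hh a}$-in\-de\-pen\-dence of $\,g_{jp}\w\nh,g_{pq}\w\nh$, shows that every component other than $\,\vg_{jk}^{\hs i}$ and $\,\vg_{jk}^{\hs p}$ is automatically independent of $\,x^{\hh a}\nh$ (many being identically zero), while $\,\vg_{jk}^{\hs i}=-\frac{1}{2}g\hh^{ia}\partial\nh_a\w g_{jk}\w$ (which is Lemma~\ref{gmijk}) and $\,\vg_{jk}^{\hs p}=\frac{1}{2}g\hh^{pq}(\partial\nh_j\w g_{kq}\w+\partial\nh_k\w g_{jq}\w-\partial\nh_q\w g_{jk}\w)-\frac{1}{2}g\hh^{pa}\partial\nh_a\w g_{jk}\w$. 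Applying $\,\partial\nnh_b\,$ and invoking the $\,x^{\hh b}$-in\-de\-pen\-dence of $\,g\hh^{ia},g\hh^{pq},g\hh^{pa},g_{jq}\w,g_{kq}\w$, non\-sin\-gu\-lar\-i\-ty of $\,[g\hh^{ia}]\,$ gives $\,\partial\nnh_b\vg_{jk}^{\hs i}=0\Leftrightarrow\partial\nh_a\partial\nnh_b g_{jk}\w=0$, and under that assumption, non\-sin\-gu\-lar\-i\-ty of $\,[g\hh^{pq}]\,$ then gives $\,\partial\nnh_b\vg_{jk}^{\hs p}=0\Leftrightarrow\partial\nh_p\partial\nnh_b g_{jk}\w=0$, establishing the first equivalence.

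For the equivalence with (\ref{eqv}), $\,\partial\nh_a\partial\nh_b g_{jk}\w=0\,$ says that $\,g_{jk}\w$ is jointly affine in the $\,x^{\hh a}\nh$, producing the decomposition $\,g_{jk}\w=x^{\hh a}B_{ajk}\w+\stf\hn_{jk}\w$ with $\,\partial\nh_b B_{ajk}\w=\partial\nh_a\stf\hn_{jk}\w=0$; under this decomposition $\,\partial\nh_a\partial\nh_p g_{jk}\w=\partial\nh_p B_{ajk}\w$, so the supplementary condition $\,\partial\nh_a\partial\nh_p g_{jk}\w=0\,$ is equivalent to $\,\partial\nh_p B_{ajk}\w=0$. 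The main bookkeeping obstacle is the case-by-case confirmation that all horizontal Christoffel components other than $\,\vg_{jk}^{\hs i}$ and $\,\vg_{jk}^{\hs p}$ are $\,x^{\hh a}$-in\-de\-pen\-dent; this is routine given the inverse-metric block structure, but requires a patient enumeration of the three-index cases with upper index of type $\,p$ or $\,i$.
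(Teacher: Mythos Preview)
Your argument is correct, but it differs from the paper's. The paper first invokes the duality (\ref{oco}) to replace $\,\dz$-pro\-ject\-a\-bil\-i\-ty by $\,\vz$-pro\-ject\-a\-bil\-i\-ty with $\,\vz=\dz^\perp\nh$, and then applies Lemma~\ref{lccoo} with $\,\q=n-r\,$ rather than $\,\q=r$. The advantage is that the ``horizontal'' range in Lemma~\ref{lccoo} is then just $\,\{1,\dots,r\}$, so the only Christoffel components entering the criterion carry an upper index of type $\,i$, and Lemma~\ref{gmijk} immediately converts $\,\partial\nnh_a\w\vg_{\hskip-2.6ptjk}^{\hs i}=\partial\nnh_p\w\vg_{\hskip-2.6ptjk}^{\hs i}=0\,$ into $\,\partial\nnh_a\w\partial\nh_b\w g\nh_{jk}\w=\partial\nnh_a\w\partial\nnh_p\w g\nh_{jk}\w=0$. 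No inverse-metric block analysis and no case enumeration over $\,\vg_{\hskip-2.6ptBC}^{\hs p}\,$ is needed. Your route, by contrast, avoids relying on (\ref{oco}) and works directly with $\,\dz$, at the cost of computing the full block structure of $\,[g^{AB}]\,$ and checking that all horizontal Christoffel symbols other than $\,\vg_{\hskip-2.6ptjk}^{\hs i}\,$ and $\,\vg_{\hskip-2.6ptjk}^{\hs p}\,$ are automatically $\,x^{\hh a}$-in\-de\-pen\-dent. Both arguments are valid; the paper's is shorter, while yours is self-contained and yields the explicit formulae for $\,\vg_{\hskip-2.6ptjk}^{\hs p}\,$ along the way.
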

\begin{proof}According to Lemma~\ref{lccoo}, the requirement that 
$\,\nabla\hs$ be $\,\vz\nh$-pro\-ject\-a\-ble, where 
$\,\vz\nh=\dz\hh^\perp\nnh$,
equivalent -- by (\ref{oco}) -- to its $\,\dz$-pro\-ject\-a\-bil\-i\-ty,
reads, 
$\,\vg_{\hskip-2.6ptaj}^{\hs i}\nh
=\nh\vg_{\hskip-2.6ptab}^{\hs i}\nh
=\vg_{\hskip-2.6ptpj}^{\hs i}\nh
=\nh\vg_{\hskip-2.6ptap}^{\hs i}\nh
=\nh\vg_{\hskip-2.6ptpq}^{\hs i}\nh
=\nh\partial\nnh_a\w\vg_{\hskip-2.6ptjk}^{\hs i}\nh
=\nh\partial\nnh_p\w\vg_{\hskip-2.6ptjk}^{\hs i}\nh=0$. However,
$\,\vg_{\hskip-2.6ptaj}^{\hs i}\nh
=\nh\vg_{\hskip-2.6ptab}^{\hs i}\nh
=\vg_{\hskip-2.6ptpj}^{\hs i}\nh
=\nh\vg_{\hskip-2.6ptap}^{\hs i}\nh
=\nh\vg_{\hskip-2.6ptpq}^{\hs i}\nh=0$ amounts, in view of
(\ref{nbp}), to $\,\vz\hs$ being $\,\nabla\nh$-par\-al\-lel, which is the
case here, so that $\,\nabla$ is $\,\vz\nh$-pro\-ject\-a\-ble
(or $\,\dz$-pro\-ject\-a\-ble) if and only if
$\,\partial\nnh_a\w\vg_{\hskip-2.6ptjk}^{\hs i}
=\partial\nnh_p\w\vg_{\hskip-2.6ptjk}^{\hs i}=0$, and our claim follow
from Lemma~\ref{gmijk}.
\end{proof}
Lemma~\ref{nbprj} completely describes the local picture of
null-pro\-ject\-a\-bil\-i\-ty for Le\-vi-Ci\-vi\-ta connections, illustrating
the relative strength of the pro\-ject\-a\-bil\-i\-ty requirement
versus just assuming the distribution to be null and parallel.

In addition to being essentially trivial, Lemma~\ref{nbprj} also fails to
identify various geometric aspects of this situation which have a
co\-or\-di\-nate-free description. We address such aspects in Section~\ref{gc}.

\section{Cotangent af\-fine bundles}\label{ca}
\setcounter{equation}{0}
An {\it af\-fine bundle\/} over a manifold $\,\bs\,$ is defined in the usual
way, so that there is a total space $\,\mf\,$ with a bundle projection 
$\,\pi:M\to\bs$, each fibre $\,\mf\hskip-2.3pt_y\w=\pi^{-\nnh1}\nh(y)$ of
which carries a structure of an af\-fine space depending smoothly on
$\,y\in\bs$. It has its associated vector bundle $\,N\nh$, the fibre of which
over each $\,y\in\bs\,$ forms the translation vector space of the af\-fine
space $\,\mf\hskip-2.3pt_y\w$. Any fixed section $\,S\,$ of $\,\mf\nh$,
treated as a sub\-man\-i\-fold of the total space $\,\mf\nh$, allows us to
identify $\,\mf\,$ with $\,N\hs$ by providing in each fibre
$\,\mf\hskip-2.3pt_y\w$ the origin $\,o_y\w$ given by 
$\,S\cap\nh\mf\hskip-2.3pt_y\w=\{o_y\w\}$. Note that, the fibres being
contractible, a global section $\,S\,$ always exists.

We call such $\,\mf\,$ a {\it cotangent af\-fine bundle\/} over 
$\,\bs\,$ if $\,N\nh=\tab$.

Cotangent af\-fine bundles are encountered in a variety of 
interesting situations. One example arises in the case of a real line bundle
or a complex Her\-mit\-i\-an line bundle $\,\varLambda\,$ over $\,\bs$. The
linear connections (or, respectively, Her\-mit\-i\-an linear connections) in
$\,\varLambda\,$ then constitute precisely all the sections of an af\-fine
bundle over $\,\bs$ associated with $\,\tab$.

In the complex hol\-o\-mor\-phic category, with a fixed hol\-o\-mor\-phic
vector bundle $\,N\hs$ over a complex manifold $\,\bs$, the set of equivalence
classes of hol\-o\-mor\-phic af\-fine bundles over $\,\bs\,$ associated with
$\,N\hs$ stands in a natural one-to-one correspondence with the first
sheaf co\-ho\-mol\-o\-gy group $\,H^1\nh(\bs,\mathcal{F})$, for the sheaf
$\,\mathcal{F}\,$ of local hol\-o\-mor\-phic sections of $\,N\nh$. This
applies, in particular, to $\,N\nh=\tab$, the hol\-o\-mor\-phic cotangent
bundle of $\,\bs$.

We will introduce a generalization of cotangent af\-fine bundles in
Section~\ref{bc}. 

\section{Riemann extensions}\label{re}
\setcounter{equation}{0}
Given a cotangent af\-fine bundle $\,\mf\,$ over a manifold $\,\bs\,$
(Section~\ref{ca}), every \hbox{$1$-form} $\,\of\,$ on $\,\bs\,$ may be viewed
as a fibre-pre\-serv\-ing dif\-feo\-mor\-phism $\,\mf\nh\to\mf\nh$,
\begin{equation}\label{odf}
\mathrm{acting\ in\ each\ fibre\ }\,\mf\hskip-2.3pt_y\w\mathrm{\ via\ the\
translation\ by\ }\,\of_y\w\in\tayb.
\end{equation}
Let $\,\pi:\mf\nh\to\bs\,$ and $\,\vz=\,\mathrm{Ker}\hskip2.3ptd\pi$ denote the
bundle projection and the vertical distribution. By a {\it stand\-ard-type
metric\/} on $\,\mf\,$ we mean any pseu\-\hbox{do\hs-}Riem\-ann\-i\-an
metric $\,g\,$ on $\,\mf\,$ having $\,g_x\w(\xi,w)=\xi(d\pi\nh_x\w w)$ for
any $\,x\in\mf\nh$, any $\,w\in\txm\nh$, and any vertical vector
$\,\xi\in\vx=\tayb$, with $\,y=\pi(x)$. To define such $\,g$, it suffices to
fix a horizontal distribution $\,\hz\,$ on $\,\mf$, with
$\,T\nh\mf\nh=\vz\oplus\hn\hz$, and prescribe the restriction of $\,g\,$ to
$\,\hz\,$ (which may be any section of $\,[\hs\hz^*\nh]^{\odot2}\nh$, since
nondegeneracy of $\,g\,$ then follows as $\,g_x\w$ has a nonsingular matrix in
a ver\-ti\-cal-hor\-i\-zon\-tal basis of $\,\txm$). The vertical distribution
$\,\vz\hs$ being $\,g$-null, every stand\-ard-type metric $\,g\,$ has the
neutral metric signature.

Pat\-ter\-son and Walk\-er's {\it Riemann extensions\/}
\cite[p.\ 26]{patterson-walker} form a class of neutral
pseu\-\hbox{do\hs-}Riem\-ann\-i\-an metrics on cotangent af\-fine bundles
$\,\mf\,$ over any manifold $\,\bs$ equipped with a fixed tor\-sion-free
connection $\,D$. We define them here to be those 
stand\-ard-type metrics on $\,\mf\,$ which, for every $\,1$-form $\,\of\,$ on
$\,\bs\,$ treated as a dif\-feo\-mor\-phism $\,\of:\mf\nh\to\mf\,$ with
(\ref{odf}), satisfy \cite[\S8]{patterson-walker} the transformation rule
\begin{equation}\label{rem}
\of\hh^*\nnh g\,\,=\,\,g\,\,+\,\hs\pi^*\nnh\ko\of\hh,
\end{equation}
$\ko\,$ being the {\it Kil\-ling operator\/} associated with $\,D$, 
sending any $\,1$-form $\,\of\,$ on $\,\bs$ to the symmetric 
twice-co\-var\-i\-ant tensor field
\begin{equation}\label{kil}
\ko\of\,=\,D\hh\of\,+\,[D\hh\of]^*\hskip5pt\mathrm{or,\ in\ coordinates,\ \
}\,[\ko\of]_{ij}\w=\,\,\of_{i,j}\w+\,\,\of_{j,i}\w\hh.
\end{equation}
Thus, $\,\of\,$ is a $\,g$-isom\-e\-try if $\,\ko\of=0$.

We use the term `Riemann extension\nh' narrowly.  Wider classes of Riemann 
extensions have been discussed in the literature. See, for instance, 
\cite{patterson-walker}, \cite{afifi} and, more recently, 
\cite{calvino-louzao-garcia-rio-gilkey-vazquez-lorenzo}. 
As the next lemma shows, the above definition of Riemann extensions is
equivalent to the standard one, appearing in
\cite[formula (28)]{patterson-walker}.
\begin{lemma}\label{eqdef}Let\/ $\,\mf\,$ be the total space of a cotangent
af\-fine bundle over an\/ $\,r\hn$-di\-men\-sion\-al manifold\/ $\,\bs\,$
carrying a tor\-sion-free connection\/ $\,D$.
Given any sub\-man\-i\-fold\/ $\,S\,$ of\/ $\,\mf\,$ forming a
global section of the af\-fine bundle, and any symmetric twice-co\-var\-i\-ant
tensor field\/ $\,\stf\,$ on\/ $\,S$, there exists a unique Riemann extension
metric\/ $\,g\,$ for\/ $\,D\,$ the restriction of which to\/ $\,S\,$
equals\/ $\,\stf$.

If\/ $\,S\,$ is used to identify\/ $\,\mf\,$ with $\,\tab$, so as to turn\/
$\,S\,$ into the zero section\/ $\,\bs\subseteq\tab$, then, in local
coordinates\/ $\,x^{\hh i}\nnh,\xi_{\hh i}\w\hskip2pt$ for\/ $\,\tab\,$
arising from a coordinate system\/ $\,x^{\hh i}$ for\/ $\,\bs\,$ in which\/
$\,D\,$ has the components\/ $\,\vg_{\hskip-2.6ptjk}^{\hs i}$,
\begin{equation}\label{rex}
g\,=\,2\hs d\hh\xi_{\hh i}\w\nnh\odot\hh dx^{\hh i}\hs+\,(\stf\hn_{jk}\w\hs
-\,2\hs\xi_{\hh i}\w\vg_{\hskip-2.6ptjk}^{\hs i})\,
dx^{\hh j}\nnh\odot\hh dx^{\hh k}\nnh.
\end{equation}
With the index ranges\/ $\,i,j,k\in\{1,\dots,r\}\,$ and\/
$\,a,b\in\{r+1,\dots,2r\}$, 
using any fixed 
nonsingular\/ $\,r\times r\hs$ matrix\/ $\,[g_{ia}\w]\,$ of constants and
its inverse\/ $\,[g^{\hs ai}]$, we may set\/
$\,x^{\hh a}\nh=g^{ai}\xi_{\hh i}\w$. In the resulting coordinates\/
$\,x^{\hh1}\nh,\dots,x^{\hh2r}\nh$, {\rm(\ref{rex})} reads\/
\hbox{$\,g=2g_{ia}\w dx^{\hh i}\nnh\odot\hh dx^{\hh a}$}
\hbox{$+(\stf\hn_{jk}\w
-2g_{ia}\w x^{\hh a}\vg_{\hskip-2.6ptjk}^{\hs i})\,
dx^{\hh j}\nnh\odot\hh dx^{\hh k}$,} that is, $\,g\,$ has the components\/
\begin{equation}\label{gcp}
g\nh_{jk}\w=\stf\hn_{jk}\w
-2g_{ia}\w x^{\hh a}\nnh\vg_{\hskip-2.6ptjk}^{\hs i}\,\mathrm{\ and\
}\,g_{ab}\w=0\mathrm{,\ along\ with\ our\ fixed\ constants\ }\,g_{ia}\w\hh.
\end{equation}
\end{lemma}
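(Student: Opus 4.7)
The plan is to reduce the statement to a coordinate calculation on $\tab$. I would first use the section $\,S\,$ to identify $\,\mf\,$ with $\,\tab\,$ so that $\,S\,$ becomes the zero section, and then fix coordinates $\,x^{\hh i}$ on $\,\bs\,$ in which $\,D\,$ has Christoffel symbols $\,\vg_{\hskip-2.6ptjk}^{\hs i}$. In the induced coordinates $\,(x^{\hh i}\nh,\xi_{\hh i}\w)\,$ on $\,\tab$, the vertical distribution $\,\vz\,$ is spanned by the $\,\partial/\partial\xi_{\hh i}\w$, so the standard-type requirements force the vertical-vertical block of $\,g\,$ to vanish and the vertical-horizontal block to realize the natural pairing. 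Consequently any standard-type metric has the form
\[
g\,=\,2\,d\xi_{\hh i}\w\odot dx^{\hh i}\,+\,G_{jk}(x,\xi)\,dx^{\hh j}\nnh\odot dx^{\hh k},
\]
leaving only the symmetric functions $\,G_{jk}\,$ to be determined.

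Next I would unpack the transformation rule (\ref{rem}). A $\,1$-form $\,\of\,$ on $\,\bs\,$ acts via (\ref{odf}) in these coordinates as $\,(x,\xi)\mapsto(x,\xi+\of(x))$, hence $\,\of^*(d\xi_{\hh i}\w)\nh=\nh d\xi_{\hh i}\w\nh+\partial_{\hh j}\w\of_{\hh i}\w\,dx^{\hh j}$, and therefore
\[
\of^*\nh(2\,d\xi_{\hh i}\w\odot dx^{\hh i})\,=\,2\,d\xi_{\hh i}\w\odot dx^{\hh i}\hs+\hs(\partial_{\hh j}\w\of_{\hh k}\w\nh+\nh\partial_{\hh k}\w\of_{\hh j}\w)\,dx^{\hh j}\nnh\odot dx^{\hh k}.
\]
Since the components of $\,\ko\of\,$ are $\,\partial_{\hh j}\w\of_{\hh k}\w\nh+\nh\partial_{\hh k}\w\of_{\hh j}\w\nh-2\vg_{\hskip-2.6ptjk}^{\hs i}\of_{\hh i}\w$, the symmetric partial-derivative pieces cancel in (\ref{rem}), reducing it to
\[
G_{jk}(x,\xi+\of(x))\hs-\hs G_{jk}(x,\xi)\,=\,-2\vg_{\hskip-2.6ptjk}^{\hs i}(x)\,\of_{\hh i}\w(x)
\]
for every local $\,1$-form $\,\of$. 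Prescribing arbitrary constant values for $\,\of\,$ at a chosen point forces $\,G_{jk}\,$ to be affine in $\,\xi\,$ with slope $\,-2\vg_{\hskip-2.6ptjk}^{\hs i}$, and the condition $\,g|_S=\stf\,$ then pins the constant term down to $\,\stf\hn_{jk}\w(x)$. This yields uniqueness and the formula (\ref{rex}).

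For global existence I would verify directly that (\ref{rex}) is chart-independent and satisfies (\ref{rem}); the latter is just the reverse of the calculation above, and the former follows from tensoriality of $\,\stf\,$ together with the observation that the remaining combination in (\ref{rex}) is the intrinsic lift, via $\,D$, of the canonical pairing on $\,\tab$. Finally, to pass to (\ref{gcp}) I would substitute $\,\xi_{\hh i}\w=g_{ia}\w x^{\hh a}\nh$, whence $\,d\xi_{\hh i}\w=g_{ia}\w\,dx^{\hh a}\nh$; then (\ref{rex}) becomes
\[
g\,=\,2g_{ia}\w\,dx^{\hh i}\nnh\odot dx^{\hh a}\hs+\hs(\stf\hn_{jk}\w\nh-\nh2g_{ia}\w x^{\hh a}\vg_{\hskip-2.6ptjk}^{\hs i})\,dx^{\hh j}\nnh\odot dx^{\hh k},
\]
matching (\ref{gcp}) directly. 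The main obstacle is spotting the cancellation in the transformation rule: that $\,\of^*\,$ applied to the canonical pairing $\,2\,d\xi_{\hh i}\w\odot dx^{\hh i}\,$ produces precisely the symmetric partial-derivative piece of $\,\ko\of$, so that only the Christoffel contribution survives, yielding the clean affine-in-$\,\xi\,$ equation. Once this cancellation is isolated, the remaining steps --- uniqueness, existence, and the final change of variables --- are routine.
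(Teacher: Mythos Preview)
Your proposal is correct and follows essentially the same route as the paper: both pass to coordinates $(x^{\hh i}\nh,\xi_{\hh i}\w)$ on $\tab$, compute $\of^*$ of the canonical pairing $2\,d\xi_{\hh i}\w\odot dx^{\hh i}$, and observe that the resulting symmetric-derivative term matches the corresponding piece of $\ko\of$, leaving only the Christoffel contribution and hence the affine-in-$\xi$ form. The one cosmetic difference is in the uniqueness argument: the paper translates a given point $x$ directly to the origin $o\in S$ by a single $\of$ and reads off $g_x=\of_{\nnh x}^*g_o-[\pi^*\ko\of]_x$, rather than first extracting your functional equation for $G_{jk}$; the content is the same.
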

\begin{proof}The existence claim is immediate since (\ref{rex}) defines a
metric $\,g\,$ with the required properties: the zero section
$\,\bs\subseteq\tab\,$
being given by $\,\xi_{\hh i}\w=0$, the restriction of $\,g\,$ to $\,\bs\,$ 
is nothing else than $\,\stf$, while (\ref{rem}) follows as the
$\,\of$-pull\-back operation applied to differential forms on $\,\mf\,$
commutes with $\,d$, leaves invariant functions on $\,\bs\,$ treated as defined
on $\,\mf\nh$, and
$\,\of\hh^*\hn\xi_i\w=\xi_i\w\circ\of=\xi_i\w+\of_{\hn i}\w$,
so that $\,\of\hh^*\nnh g$ equals the right-hand side of (\ref{rex}) plus
$\,(\partial\nh\nnh_j\w\of_{\nh i}\w+\partial\nnh_i\w\of_{\nh j}\w
-2\hs\of_{\nh k}\w\vg_{\hskip-2.6ptij}^{\hs k})
\,dx^{\hh i}\nnh\odot\hh dx^{\hh j}\nh=\ko\of$.

Uniqueness of $\,g\,$ follows easily as well: given $\,x\in\mf\nh$, let
$\,y=\pi(x)\in\bs$, so that $\,x\,$ lies in the affine space
$\,\my=\pi^{-\nnh1}\nh(y)$. If we fix a $\,1$-form $\,\of\,$ on
$\,\bs\,$ for which $\,o=x+\of\nh_y\w$ is the unique intersection point of 
$\,S\,$ and $\,\my$ (the origin in $\,\my$ provided by the
global section $\,S$) then, by (\ref{rem}),
$\,g_x\w=\of_{\nnh x}^*g_o\w-[\pi^*\nnh\ko\of]\hn_x\w$, which proves our claim as
$\,x\,$ and $\,S\,$ uniquely determine $\,o$.
\end{proof}
The following intrinsic local characterization of Riemann extension metrics is
a special case of a result of Afifi \cite[p.\ 313]{afifi}. See also
\cite[p.\ 369,\ Theorem 4.5]{derdzinski}. We provide a proof here for the
reader's convenience.
\begin{theorem}\label{afifi}Given a tor\-sion-free connection\/ $\,D$ on
a manifold\/ $\,\bs$ and a Riemann extension 
metric\/ $\,g\,$ for\/ $\,D$ on the total space\/ $\,\mf$ of a cotangent 
af\-fine bundle over\/ $\,\bs$, the Le\-vi-Ci\-vi\-ta connection\/ $\,\nabla$ 
of\/ $\,g\,$ is pro\-ject\-a\-ble along the vertical distribution\/ $\,\vz$ ot
the bundle projection\/ $\,\mf\nh\to\bs$, while $\,\vz$ itself is\/ $\,g$-null 
as well as\/ $\,\nabla\nh$-par\-al\-lel, and the projected tor\-sion-free
connection described in Lemma\/~{\rm\ref{prjcn}} coincides with\/ $\,D$.

Conversely, let the Le\-vi-Ci\-vi\-ta connection\/ $\,\nabla$ of a
pseu\-d\hbox{o\hs-}\hskip0ptRiem\-ann\-i\-an manifold\/ $\,(\mf\nh,g)\,$
with\/ $\,\dim\mf\nh=2r$ be pro\-ject\-a\-ble along an 
\hbox{$\,r$\hh-}\hskip0ptdi\-men\-sion\-al null parallel distribution\/ 
$\,\vz\nh$. Then for  every point\/ $\,x\in\mf\nh$, there exist a 
manifold\/ $\,\bs\,$ of dimension\/ $\,r$, a tor\-sion\-free connection\/ 
$\,D$ on\/ $\,\bs$, and a dif\-feo\-mor\-phic identification of a
neighborhood\/ of\/ $\,x\,$ in\/ $\,\mf\hs$ with an open subset of the total
space of a cotangent af\-fine bundle over\/ $\,\bs$, under which\/ 
$\,g,\vz\hs$ and the projected tor\-sion-free connection on a local 
leaf space correspond to a Riemann extension metric for\/ 
$\,D$, the vertical distribution of the af\-fine-bun\-dle projection, and\/
$\,D$.
\end{theorem}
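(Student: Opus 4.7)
I would verify each assertion directly from the coordinate description (\ref{gcp}) of Lemma~\ref{eqdef}. Since $\dim\mf\nh=2r$, the middle index range in (\ref{rin}) is empty, so (\ref{gcp}) satisfies all of the Walk\-er conditions (\ref{gab}); by the converse half of Walk\-er's theorem noted after (\ref{gab}), this already shows that $\vz\hs$ is $\,g\,$-null and $\nabla\nh$-par\-al\-lel. The expression for $g_{jk}\hs$ in (\ref{gcp}) has precisely the form (\ref{eqv}) with $B_{ajk}\w=-2g_{ia}\vg_{\hskip-2.6ptjk}^{\hs i}\hs$ and $\stf\hn_{jk}\hs$ depending only on $x^{\hh1}\nnh,\dots,x^{\hh r}\nh$, so Lemma~\ref{nbprj} gives $\vz\nh$-pro\-ject\-a\-bil\-i\-ty of $\,\nabla$; Lemma~\ref{gmijk} then identifies the components of the projected connection as $\,-g^{\hs ai}B_{ajk}/2=\vg_{\hskip-2.6ptjk}^{\hs i}\nh$, so it coincides with $\,D$.

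\textbf{Converse.} Fix $\,x\in\mf\nh$. Walk\-er's theorem provides local coordinates $\,x^{\hh1}\nnh,\dots,x^{\hh2r}\,$ around $\,x\,$ in which $\,g\,$ has the form (\ref{gab}) and $\vz\hs$ is spanned by the last $\,r\,$ coordinate vector fields; the middle range is again empty, so $[g_{ia}\w]\,$ is a constant nonsingular $r\times r\hs$ matrix. After shrinking the domain, (\ref{fbr}) supplies a local leaf space $\,\bs\,$ of dimension $\,r\,$ with induced coordinates $\,x^{\hh1}\nnh,\dots,x^{\hh r}\nh$, and Lemma~\ref{prjcn} equips $\,\bs\,$ with the projected tor\-sion-free connection $\,D$. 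Applying Lemma~\ref{nbprj} to the hypothesis that $\,\nabla\hs$ is $\,\vz\nh$-pro\-ject\-a\-ble yields (\ref{eqv}); Lemma~\ref{gmijk} then gives $\,\bvg_{\hskip-2.6ptjk}^{\hs i}\nh=-g^{\hs ai}B_{ajk}/2\,$ for the components of $\,D$, equivalently $\,B_{ajk}\w=-2g_{ia}\bvg_{\hskip-2.6ptjk}^{\hs i}\nh$.

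To complete the identification, I would take the section $\,S\,$ given by $\,x^{\hh a}\nh=0\,$ and introduce coordinates $\,(x^{\hh i}\nnh,\xi_{\hh i}\w)\,$ by setting $\,\xi_{\hh i}\w=g_{ia}x^{\hh a}\nh$; this realises a neighborhood of $\,x\,$ as an open subset of $\,\tab\hs$ and supplies the cotangent af\-fine bundle structure over $\,\bs$. Substituting the expression for $\,B_{ajk}\hs$ back into (\ref{eqv}) produces $\,g_{jk}\nh=\stf\hn_{jk}-2g_{ia}x^{\hh a}\bvg_{\hskip-2.6ptjk}^{\hs i}\nh$, which matches (\ref{gcp}) verbatim, so by the existence-uniqueness part of Lemma~\ref{eqdef} the metric $\,g\,$ is the Riemann extension of $\,D\,$ determined by $\,S\,$ and by the boundary tensor $\,\stf\hn_{jk}\hs$ along $\,S\nh$. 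Under the resulting identification, the vertical distribution of the af\-fine-bun\-dle projection is precisely $\,\vz\nh$, as required.

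\textbf{Main obstacle.} The computational content is modest; the real issue is translating the coordinate identities (\ref{gab})--(\ref{eqv}) into the intrinsic picture. What makes this translation automatic is that Lemmas~\ref{nbprj}, \ref{gmijk} and \ref{prjcn}, taken together, force the coordinate form of $\,g\,$ into the Riemann-ex\-ten\-sion template (\ref{gcp}) written in terms of the projected connection $\,D\nh$, after which the uniqueness half of Lemma~\ref{eqdef} delivers the intrinsic identification with no further work.
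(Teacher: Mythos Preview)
Your proposal is correct and follows essentially the same route as the paper: both directions reduce to the observation that, with the middle index range empty, (\ref{gcp}) is exactly (\ref{gab}) together with (\ref{eqv}) for $B_{ajk}=-2g_{ia}\vg_{\hskip-2.6ptjk}^{\hs i}$, after which Lemma~\ref{nbprj} and the identification of the projected connection's components finish the argument. You supply a bit more detail than the paper (the explicit $\xi_i=g_{ia}x^a$ coordinates and the appeal to Lemma~\ref{eqdef}), and you cite Lemma~\ref{gmijk} where the paper cites the final clause of Lemma~\ref{lccoo}; strictly speaking both are needed, since Lemma~\ref{gmijk} computes $\nabla$'s components $\vg_{\hskip-2.6ptjk}^{\hs i}$ while Lemma~\ref{lccoo} is what identifies those with the components of the projected connection $D$.
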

\begin{proof}The first part is immediate from Lemmas~\ref{nbprj}
and~\ref{lccoo}: (\ref{gcp}) amounts to (\ref{gab}), where
the indices $\,p,q\,$ have an empty range, with (\ref{eqv}) for
$\,B\nnh_{ajk}\w=-2g_{ia}\w\nh\vg_{\hskip-2.6ptjk}^{\hs i}$ and our 
$\,\stf\hn_{jk}\w$. 
For the second part, Walk\-er's theorem \cite{walker} 
applied to $\,\dz=\vz\hs$ gives (\ref{gab}) with an empty range for the
indices $\,p,q$, while (\ref{eqv}) now becomes (\ref{gcp}) if one defines
$\,\vg_{\hskip-2.6ptjk}^{\hs i}\,$ by 
$\,B\nnh_{ajk}\w=-2g_{ia}\w\nh\vg_{\hskip-2.6ptjk}^{\hs i}$ using any fixed 
nonsingular $\,r\times r\hs$ matrix $\,[g_{ia}\w]\,$ of constants.
Lemma~\ref{nbprj} and the final clause of Lemma~\ref{lccoo} then yield our
claim.
\end{proof}
The transformation rule (\ref{rem}) involves two actions by the
in\-fi\-nite-di\-men\-sion\-al Abel\-i\-an group
$\,\varOmega^1\hskip-2pt\bs\,$ of all 
$\,1$-forms $\,\of\,$ on $\,\bs$, one via the $\,\of$-pull\-back, the
other -- via the addition of $\,\pi^*\nh\ko\of$. The two actions of 
$\,\varOmega^1\hskip-2pt\bs\,$ commute with each other, which allows us to 
think of {\it Riemann extension metrics on\/} $\,\mf\,$ {\it as the fixed 
points of a specific action of\/} $\,\varOmega^1\hskip-2pt\bs\,$ on the the set
of stand\-ard-type metrics on $\,\mf\,$ (which both actions leave invariant).

\section{Geometric consequences of null-pro\-ject\-a\-bil\-i\-ty}\label{gc}
\setcounter{equation}{0}
Let $\,\dz\,$ be an $\,r$-di\-men\-sion\-al null parallel distribution on a 
pseu\-d\hbox{o\hs-}\hskip0ptRiem\-ann\-i\-an manifold $\,(\mf\nh,g)\,$
with $\,\dim\mf\nh=n$. Every point of $\,\mf\,$ has a neighborhood $\,\,U$
such that, for some manifolds $\,\bs\,$ and $\,\qe\,$ of dimensions $\,r\hh$
and $\,n-2r$, there are three bundle projections, and two vertical
distributions of interest to us:
\begin{equation}\label{bpv}
\mq:U\nh\to\qe\hh,\hskip5.3pt
\qs:\qe\to\bs\hh,\hskip5.3pt\pi=\qs\circ\mq:U\nh\to\bs\hh,\hskip5.3pt
\dz\hh^\perp\nh=\text{\rm Ker}\hskip1.7ptd\pi\hh,\hskip5.3pt
\dz=\text{\rm Ker}\hskip1.7ptd\hh\mq\hh.,
\end{equation}
as one sees applying (\ref{fbr}) twice, first to realize 
$\,\dz$, locally, as the vertical 
distribution of a bundle projection and, noting that
$\,\vz\nh=\dz\hh^\perp\nh$ then projects 
onto an integrable distribution on the base manifold, to similarly realize
this projected distribution.

For the remainder of this section, assume that the Le\-vi-Ci\-vi\-ta
connection $\,\nabla$ \hbox{of $\,(\mf\nh,g)\,$} is pro\-ject\-a\-ble along
$\,\dz\,$ or, equivalently due to (\ref{oco}), along
$\,\vz\nh=\dz\hh^\perp\nh$.

With $\,\pi\,$ and $\,\mq\,$ appearing in (\ref{bpv}), and any $\,y\in\bs$,
the assignment
\begin{equation}\label{pxg}
\tayb\ni\xi\mapsto v\,\,\mathrm{\ such\ that\ }\,\,\pi^*\nh\xi\,
=\,g(v,\,\cdot\,)\hh,
\end{equation}
is a linear iso\-mor\-phism between $\,\tayb\,$ and the space of
all vector fields $\,v\,$ tangent to $\,\dz\nh$, defined just on the leaf
$\,\pi^{-\nnh1}\nh(y)\,$ of $\,\vz\nh=\mathcal{P}^\perp$ and parallel along
this leaf. That $\,v$ must be tangent to $\,\dz\,$ follows as
$\,\,\pi^*\nh\xi\,$ vanishes on $\,\vz\nh=\mathcal{P}^\perp\nh$. To show that
$\,\ns w\w v=0$ for any vector field $\,w\,$ on $\,\,U\,$ tangent to 
$\,\vz\nh$, we are free to assume that the $\,1$-form $\,\xi\,$ is defined on
$\,\bs\,$ rather that just at $\,y$. Pro\-ject\-a\-bil\-i\-ty of $\,\nabla\nh$
along $\,\vz\nh$, with the projected connection $\,D\,$ on $\,\bs$, easily
gives $\,g(\ns w\w v,\,\cdot\,)=\ns w\w[\pi^*\nh\xi]=\pi^*\nh[D\nh_w\w\xi]\,$
for {\it all\/} $\,\pi$-pro\-ject\-a\-ble vector fields, where $\,w\,$ also
denotes the projected image of $\,w$. In our case, $\,w\,$ projects onto
$\,0$, and our claim follows.

On the other hand, for each fibre $\,\qe\nnh_y\w=\qs^{-\nnh1}(y)\,$ of the
bundle $\,\qe$, where $\,y\in\bs$,
\begin{equation}\label{vrm}
\begin{array}{l}
\mathrm{there\ exists\ a\ unique\ pseu\-do}\hyp\mathrm{Riem\-ann\-i\-an\ 
metric\ }\,\vh\nh_y\w\mathrm{\ on\ }\,\qe\nnh_y\w\mathrm{\ such\
that}\\
\mq^*\nh\vh\nh_y\w\mathrm{\ equals\ the\ restriction\ of\
}\,g\,\mathrm{\ to\ the\ leaf\ 
}\,\,U\hskip-3pt_y\w=\hs\pi^{-\nnh1}\nh(y)\,\hs\mathrm{\ of\
}\,\hs\vz=\hs\dz^\perp\nnh. 
\end{array}
\end{equation}
In fact, the restriction of $\,g\,$ to
$\,\,U\hskip-3pt_y\w$ is pro\-ject\-a\-ble under 
$\,\mq:U\hskip-3pt_y\w\to\qe\nnh_y\w$. Namely, if 
$\,\mq$-pro\-ject\-a\-ble vector fields $\,v,w\,$ on $\,\,U\,$ are tangent to
$\,\vz\nh$, and a vector field $\,u\,$ tangent to $\,\dz\nh$, we have
$\,d_u\w[g(v,w)]=g(\ns u\w v,w)+g(v,\ns u\w w)
=g(\ns v\w u,w)+g(v,\ns w\w u)=0$ \hbox{since 
$\,[u,v]\,$} and $\,[u,w]\,$ are tangent to $\,\dz\,$ by (\ref{prj}), and so
are $\,\ns v\w u,\ns w\w u\,$
as $\,\dz\,$ is parallel, while $\,\vz\nh=\dz^\perp$ (which also proves 
nondegeneracy of the projected metric).

\section{Bundles of co\-tan\-gent-prin\-ci\-pal bundles}\label{bc}
\setcounter{equation}{0}
We call a manifold $\,\mf\,$ a {\it bundle of co\-tan\-gent-prin\-ci\-pal
bundles\/} over a base manifold $\,\bs\,$ if we are given a bundle projection
$\,\qs:\qe\to\bs\,$ with some total space $\,\qe$, while $\,\mf\,$ itself is
the total space of an af\-fine bundle over $\,\qe\,$ having the associated
vector bundle $\,\qs^*\tab\,$ (the $\,\qs$-pull\-back of $\,\tab$). This leads
to three bundle projections and two relevant vertical distributions:
\begin{equation}\label{tbp}
\mq:\mf\nh\to\qe\hh,\hskip5.3pt
\qs:\qe\to\bs\hh,\hskip5.3pt\pi=\qs\circ\mq:\mf\nh\to\bs\hh,\hskip5.3pt
\vz\nh=\text{\rm Ker}\hskip1.7ptd\pi\hh,\hskip5.3pt
\dz=\text{\rm Ker}\hskip1.7ptd\hh\mq\hh.
\end{equation}
The projection $\,\pi:\mf\nh\to\bs\,$ turns $\,\mf\,$ into a bundle over
$\,\bs\,$ with the fibre $\,\my$ 
over each $\,y\in\bs$ arising as the restriction of the af\-fine bundle
$\,\mf\,$ over $\,\qe\,$ to the fibre
$\,\qe\nnh_y\w=\qs^{-\nnh1}(y)\subseteq\qe$, and
hence forming a $\,\tayb$-prin\-ci\-pal bundle over $\,\qe\nnh_y\w$ (in the
usual sense, for the additive group $\,\tayb$).

Every section $\,\of\,$ of the vector bundle $\,\qs^*\tab\,$ over $\,\qe\,$ is
now a fibre-pre\-serv\-ing
\begin{equation}\label{dif}
\mathrm{dif\-feo\-mor\-phism\ }\,\of:\mf\nh\to\mf\nh\mathrm{,\ acting\ as\ the\
translation\ by\ }\,\of\nh_z\w\in\tayb
\end{equation}
in each fibre $\,\mq^{-\nnh1}(z)\,$ over $\,z\in\qe$, the fibre being an
af\-fine space with the translation vector space $\,\tayb$, where $\,y=\qs(z)$.

We are particularly interested in the case where, for a given bundle of
co\-tan\-gent-prin\-ci\-pal bundles, with $\,\mf\nh,\qe,\bs\,$ as above
and (\ref{tbp}),
\begin{enumerate}
  \def\theenumi{{\rm\alph{enumi}}}
\item[(a)] $\bs\,$ carries a tor\-sion-free connection $\,D$.
\end{enumerate}
Then the Kil\-ling operator $\,\ko\,$ given by (\ref{kil}) can be
extended to act on sections $\,\of$ of $\,\qs^*\tab$. Namely, in
$\,\qs^*\tab\,$ one has the $\,\qs$-pull\-back of the connection in $\,\tab$
dual to $\,D$. For simplicity, the dual connection and its $\,\qs$-pull\-back
are still denoted here by $\,D$. Thus, $\,[D\hh\of]_z\w$, at every $\,z\in\qe$,
is a linear operator $\,\tzq\to\tayb$, with $\,y=\qs(z)$, assigning to
$\,v\in\tzq\,$ the $\,1$-form $\,\beta=D\nnh_v\w\hh\of\in\tayb$, and hence
giving rise to the bi\-lin\-e\-ar form
$\,\tzq\times\tzq\ni(v,u)\mapsto[D\hh\of]_z\w(v,u)=\beta(d\hh\qs_z\w u)$. In
this way $\,D\hh\of$ is interpreted as a twice-co\-var\-i\-ant tensor field on
$\,\qe$, and the Kil\-ling operator $\,\ko$ sends $\,\of\hh$ to twice the
symmetrization of $\,D\hh\of$, that is, again,
\begin{equation}\label{kli}
\ko\of\,=\,D\hh\of\,+\,[D\hh\of]^*\nh.
\end{equation}
The other assumption to be made about a given bundle of
co\-tan\-gent-prin\-ci\-pal bundles, for $\,\mf\nh,\qe,\bs\,$ as above, reads
\begin{enumerate}
  \def\theenumi{{\rm\alph{enumi}}}
\item[(b)] $\qe\,$ is a bundle of pseu\-\hbox{do\hs-}Riem\-ann\-i\-an manifolds
over $\,\bs$.
\end{enumerate}
Equivalently, (b) states that we have an assignment
\begin{equation}\label{yhy}
\bs\,\ni\,y\,\mapsto\,\vh\nh_y\w
\end{equation}
of a pseu\-\hbox{do\hs-}Riem\-ann\-i\-an metric $\,\vh\nh_y\w$ on 
$\,\qe\nnh_y\w=\qs^{-\nnh1}(y)\,$ to each $\,y\in\bs$, depending smoothly on 
$\,y$. In other words, $\,\qe\,$ is endowed with a {\it vertical metric\/}
$\,\vh$, meaning a
pseu\-\hbox{do\hs-}Riem\-ann\-i\-an fibre metric in the vertical distribution
$\,\text{\rm Ker}\hskip1.7ptd\hh\qs$.

Bundles of co\-tan\-gent-prin\-ci\-pal bundles include cotangent af\-fine
bundles (Section~\ref{ca}) as a special case, 
with $\,\qe=\bs\,$ and $\,\qs=\mathrm{Id}$, so that $\,\qe\,$ has the 
sin\-gle-point fibres $\,\qe\nnh_y\w=\{y\}$, and the
\hbox{$\tayb$-prin\-ci\-pal} bundle over each $\,\{y\}\,$ is a single af\-fine
space associated with $\,\tayb$.

\section{Riemann pull\-back-ex\-ten\-sions}\label{rp}
\setcounter{equation}{0}
Let $\,\mf\,$ be a bundle of co\-tan\-gent-prin\-ci\-pal bundles over
$\,\bs\,$ (Section~\ref{bc}). In analogy with Section~\ref{re}, we define a
{\it stand\-ard-type metric\/} on $\,\mf\,$ to be any
pseu\-\hbox{do\hs-}Riem\-ann\-i\-an metric $\,g\,$ on $\,\mf\,$ such that,
with the notation of (\ref{tbp}), $\,g_x\w(\xi,w)=\xi(d\pi\nh_x\w w)\,$ for
any $\,x\in\mf\nh$, any $\,w\in\txm\nh$, and any vertical vector
$\,\xi\in\dzx=\tayb$, where $\,y=\pi(x)\in\bs\,$ (and $\,\dzx=\tayb\,$ since
$\,\dzx$ is the tangent space at $\,x\,$ of the af\-fine space
$\,\mq^{-\nnh1}(\mq\hh(x))\,$ with the translation vector space $\,\tayb$).

To define such $\,g$, it suffices, again, to fix a horizontal distribution
$\,\hz\,$ in the af\-fine bundle $\,\mf\,$ over $\,\qe$, with
$\,T\nh\mf\nh=\dz\oplus\hz$, and prescribe the restriction of $\hs g\hs$ to 
$\,\hz$. The restriction may, this time, be any section of
$\,[\hs\hz^*\nh]^{\odot2}$ nondegenerate on 
the sub\-bun\-dle $\,\hz^{\hs\mathrm{o}}$ of $\,\hz\,$ with the fibre
$\,\hz_{\hskip-1ptx}^{\hs\mathrm{o}}$ at each $\,x\in\mf\,$ equal to
the pre\-im\-age under the isomorphism
$\,d\hh\mq_x\w:\hz\nnh_x\w\to \tzq\,$ of the sub\-space 
$\,\tzq\nnh_y\w\subseteq \tzq$, for
$\,z=\mq\hh(x)\in\qe$ and $\,\qe\nnh_y\w=\qs^{-\nnh1}(y)$, where 
$\,y=\pi(x)\in\bs$. Nondegeneracy of $\,g\,$ then follows since, in a basis of
$\,\txm$ containing bases of $\,\dzx$ and
$\,\hz_{\hskip-1ptx}^{\hs\mathrm{o}}$, the matrix of $\,g_x\w$ has the form
\[
\left[\begin{matrix}0&0&G\cr
0&H&*\cr
G'&*&*\end{matrix}\right]
\]
for some nonsingular square matrices $\,G,H,G'\nh$.

Suppose that a bundle $\,\mf\,$ of co\-tan\-gent-prin\-ci\-pal bundles over
$\,\bs$, with the three bundle projections in (\ref{tbp}), also satisfies
(a) -- (b) in Section~\ref{bc}: one has a fixed tor\-sion-free connection
$\,D\,$ on $\,\bs\,$ and a vertical metric $\,\vh\,$ on $\,\qe$.

By a {\it Riemann pull\-back-ex\-ten\-sion\/} for $\,D,\qe\,$ and $\,\vh\,$ we
then mean any stand\-ard-type metric $\,g\,$ on $\,M\,$ satisfying the
transformation rule (\ref{rem}) for all sections $\,\of$ of the bundle
$\,\qs^*\tab\,$ over $\,\qe\,$ and having the property that the restriction of 
$\,g\,$ to each fibre $\,\my=\pi^{-\nnh1}\nh(y)\,$ of the bundle
$\,\mf\,$ over $\,\bs\,$ equals $\,\mq^*\nh\vh\nh_y\w$. Both requirements make
sense: the former due to (\ref{dif}) -- (\ref{kli}), the latter since $\,\mq\,$
maps $\,\my$ into $\,\qe\nnh_y\w$, and $\,\vh\nh_y\w$ is a metric on
$\,\qe\nnh_y\w$.

For a global section $\,S\,$ of the  bundle $\,\mf\,$ over $\,\qe$, treated as
a sub\-man\-i\-fold of $\,\mf\nh$, and a symmetric twice-co\-var\-i\-ant tensor
field $\,\stf\,$ on $\,S$, we call $\,\stf\,$ {\it consistent with our 
vertical metric\/} $\,\vh\,$ if the push-for\-ward of $\,\stf\,$ under the
dif\-feo\-mor\-phism $\,\mq:S\to\qe$, when restricted to the
$\,\qs$-ver\-ti\-cal sub\-bun\-dle of\/ $\,T\hn\qe$, equals $\,\vh$.

The next result generalizes Lemma~\ref{eqdef}. Note that the af\-fine bundle
$\,\mf\,$ over $\,\qe\,$ admits global sections.
\begin{lemma}\label{gnqdf}Let there be given a bundle of
co\-tan\-gent-prin\-ci\-pal bundles, with\/ $\,\mf\nh,\qe,\bs,D,\vh\,$ as in
Section\/~{\rm\ref{bc}}, {\rm(\ref{tbp})} -- {\rm(\ref{yhy})} and\/ {\rm(a)}
-- {\rm(b)}, Then, for any sub\-man\-i\-fold\/ $\,S\,$ of\/ $\,\mf\,$ forming
a global section of the  bundle\/ $\,\mf\,$ over\/ $\,\qe$, and any symmetric
twice-co\-var\-i\-ant tensor field\/ $\,\stf\,$ on\/ $\,S\,$ consistent with\/
$\,\vh$, there exists a unique Riemann pull\-back-ex\-ten\-sion metric\/
$\,g\,$ for\/ $\,D,\qe\,$ and\/ $\,\vh\,$ with the restriction to\/ $\,S\,$
equal to\/ $\,\stf$.

Using the index ranges\/ {\rm(\ref{rin})} and 
any fixed nonsingular\/ $\,r\times r\hs$ matrix\/ $\,[g_{ia}\w]\,$ of 
constants, in suitable local coordinates\/ $\,x^{\hh1}\nh,\dots,x^{\hh n}\nh$, 
we can express this unique\/ $\,g\,$ as
\begin{equation}\label{get}
g=2g_{ia}\w dx^{\hh i}\nnh\odot\hh dx^{\hh a}\nh
+(\stf\hn_{jk}\w\nh-2g_{ia}\w x^{\hh a}\nnh\vg_{\hskip-2.6ptjk}^{\hs i})
\,dx^{\hh j}\nnh\odot\hh dx^{\hh k}\nnh
+(2\stf\hn_{iq}\w\hs dx^{\hh i}\nh+\vh_{pq}\w\hs dx^{\hh p})\hs
\odot\nh dx^{\hh q}\nh,
\end{equation}
with functions\/
$\,\stf\hn_{jk}\w,\,\stf\hn_{iq}\w,\vh_{pq}\w,
\,\vg_{\hskip-2.6ptjk}^{\hs i}$ satisfying the conditions
\begin{equation}\label{cdt}
\partial\nnh_a\w\stf\hn_{jk}\w\hs=\,\partial\nnh_a\w\stf\hn_{ip}\w\hs
=\,\partial\nnh_a\w\vh_{pq}\w\hs
=\,\partial\nnh_p\w\vg_{\hskip-2.6ptjk}^{\hs i}\hs
=\,\partial\nnh_a\w\vg_{\hskip-2.6ptjk}^{\hs i}\hs=\,0\hh,\hskip7pt
|\mathrm{det}\hs[g_{pq}\w]|>0\hh.
\end{equation}
Equivalently, in addition to our fixed constants\/ $\,g_{ia}\w$, the
components of\/ $\,g\,$ are
\begin{equation}\label{ghc}
g\nh_{jk}\w\nh=\nh\stf\hn_{jk}\w\nh
-\nh2g_{ia}\w x^{\hh a}\nnh\vg_{\hskip-2.6ptjk}^{\hs i},
\,\,g_{ip}\w\nh=\stf\hn_{ip}\w,
\,\,g_{pq}\w\nnh=\hn\vh_{pq}\w,\,\,g_{pa}\w\nh=g_{ab}\w\nh=0.
\end{equation}
The coordinates\/ $\,x^{\hh i}\nh,x^{\hh p}\nh,x^{\hh a}$ to\/ may be chosen
so that\/ $\,x^{\hh i}$ and\/ $\,\xi_{\hh i}\w=g_{ia}\w x^{\hh a}\nh$ are 
local coordinates for\/ $\,\tab\,$ arising from the coordinate system\/
$\,x^{\hh i}$ for\/ $\,\bs\,$ in which\/ $\hs D\hs$ has the components\/ 
$\,\vg_{\hskip-2.6ptjk}^{\hs i}$, while\/ $\,x^{\hh i}\nh,x^{\hh p}$ 
form a coordinate system in\/ $\,\qe\,$ in which\/ $\,\stf$ and\/
$\,\vh\,$ have the components\/ $\,\stf\hn_{ij}\w$,
$\,\stf\hn_{iq}\w=\stf\hn_{qi}\w$ and\/ $\,\stf\hn_{pq}\w=\vh_{pq}\w$,  
and the first two projections in\/ 
{\rm(\ref{tbp})} send\/ $\,x^{\hh i}\nh,x^{\hh p}\nh,x^{\hh a}$ to\/ 
$\,x^{\hh i}\nh,x^{\hh p}$ and, respectively, $\,x^{\hh i}\nh,x^{\hh p}$ to\/ 
$\,x^{\hh i}\nh$. Also,
\begin{equation}\label{hnp}
g=2\hs d\hh\xi_{\hh i}\w\nnh\odot\hh dx^{\hh i}\nh
+(\stf\hn_{jk}\w\nh-2\xi_{\hh i}\w\vg_{\hskip-2.6ptjk}^{\hs i})
\,dx^{\hh j}\nnh\odot\hh dx^{\hh k}\nnh
+(2\stf\hn_{iq}\w\hs dx^{\hh i}\nh+\vh_{pq}\w\hs dx^{\hh p})\hn
\odot\nh dx^{\hh q}\nh,
\end{equation}
\end{lemma}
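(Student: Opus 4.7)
The plan is to follow the proof of Lemma~\ref{eqdef} closely, inserting the $\,\qs$-vertical structure where needed. First I would choose coordinates as in the final clause of the statement: pick $\,x^{\hh i}$ on $\,\bs\,$ in which $\,D\,$ has components $\,\vg_{\hskip-2.6ptjk}^{\hs i}$; extend them to $\,x^{\hh i}\nh,x^{\hh p}$ on $\,\qe\,$ so that $\,\mathrm{Ker}\hskip1.7ptd\qs\,$ is spanned by the $\,\partial\nnh_p\w$ and $\,\vh\,$ has components $\,\vh_{pq}\w$; then use the global section $\,S\,$ to identify the af\-fine bundle $\,\mf\,$ over $\,\qe\,$ with $\,\qs^*\tab$, and introduce fibre coordinates via $\,\xi_{\hh i}\w\nh=g_{ia}\w x^{\hh a}$ for a fixed nonsingular constant matrix $\,[g_{ia}\w]$. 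With this setup, $\,S\,$ is cut out by $\,x^{\hh a}\nh=\hs0$, the three projections in (\ref{tbp}) take the claimed coordinate form, and the equations (\ref{cdt}) hold because $\,\stf\hn_{jk}\w,\stf\hn_{iq}\w,\vh_{pq}\w$ live on $\,\qe$ (pulled back to $\,\mf$) while $\,\vg_{\hskip-2.6ptjk}^{\hs i}$ depends only on the $\,x^{\hh j}$.

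For existence, I would simply declare $\,g\,$ by the explicit formula (\ref{hnp}) (equivalently (\ref{get})) and verify each defining property in turn. Nondegeneracy is visible from the block structure of the matrix (\ref{ghc}): the constant $\,[g_{ia}\w]\,$ block pairs $\,\partial\nnh_a\w$ with $\,\partial\nnh_i\w$, and the nonsingular $\,[\vh_{pq}\w]\,$ block pairs $\,\partial\nnh_p\w$ with $\,\partial\nnh_q\w$. The stand\-ard-type property is immediate from $\,g_{ap}\w\nh=g_{ab}\w\nh=\hs0\,$ and $\,g_{ai}\w\nh=g_{ia}\w$. The restriction of $\,g\,$ to a fibre $\,\my=\{x^{\hh i}\mathrm{\ const}\}\,$ is $\,\vh_{pq}\w\,dx^{\hh p}\nnh\odot\hh dx^{\hh q}\nh=\mq^*\vh\nh_y\w$, while the restriction to $\,S=\{x^{\hh a}\nh=\hs0\}\,$ matches $\,\stf$; here the consistency of $\,\stf\,$ with $\,\vh\,$ is precisely what forces the $\,(p,q)$-block to be $\,\vh_{pq}\w$, yielding the agreement. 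Finally, the transformation rule (\ref{rem}) follows because a section $\,\of=\of_{\nh i}\w(x^{\hh j}\nh,x^{\hh p})\,dx^{\hh i}\,$ of $\,\qs^*\tab\,$ acts on $\,\mf\,$ as $\,\xi_{\hh i}\w\mapsto\xi_{\hh i}\w+\of_{\nh i}\w$ while fixing $\,x^{\hh i}\nh,x^{\hh p}$, so direct substitution into (\ref{hnp}) gives $\,\of\hh^*\nnh g-g=\mq^*\nh\ko\of$, where the novel mixed $\,dx^{\hh i}\nnh\odot\hh dx^{\hh p}$ contribution comes from $\,\partial\nnh_p\w\of_{\nh i}\w$.

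Uniqueness proceeds by the same argument as in Lemma~\ref{eqdef}. Given $\,x\in\mf\nh$, set $\,z=\mq(x)$, and let $\,o\,$ be the unique intersection of $\,S\,$ with $\,\mq^{-\nnh1}(z)$. Choose a section $\,\of\,$ of $\,\qs^*\tab\,$ whose value $\,\of\nh_z\w\in\tayb$ translates $\,x\,$ to $\,o$, which is possible because $\,\qs^*\tab\nh$-sec\-tions act transitively on each $\,\mq$-fibre. By (\ref{rem}), $\,g_x\w\,$ is then determined by $\,g_o\,$ and $\,\of\nh_z\w$. But $\,g_o\,$ is fully pinned down by its restriction to $\,T\nh_o\w S\,$ (namely $\,\stf\,$) together with the stand\-ard-type identity $\,g_o(\xi,w)=\xi(d\pi\nh_o\w w)$, since $\,T\nh_o\w\mf\nh=\dz_o\oplus T\nh_o\w S$; hence $\,g_x\,$ is uniquely specified.

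The main obstacle I anticipate lies in the existence half: confirming that the cross-de\-riv\-a\-tives $\,\partial\nnh_p\w\of_{\nh i}\w\,$ produced by allowing $\,\of\,$ to vary in the $\,\qs$-vertical direction are exactly matched by the $\,\qs$-pull-back of the dual connection built into the definition (\ref{kli}) of $\,\ko$, so that $\,\of\hh^*\nnh g-g=\mq^*\ko\of\,$ holds on the nose rather than modulo a spurious $\qs$-horizontal correction. Once this bookkeeping is completed, everything else is a straight\-for\-ward linear-algebra adaptation of the classical case treated in Lemma~\ref{eqdef}.
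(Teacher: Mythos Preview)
Your proposal is correct and follows essentially the same approach as the paper's proof. Both arguments declare $\,g\,$ by the explicit formula (\ref{hnp}), verify that the translation action $\,\xi_{\hh i}\w\mapsto\xi_{\hh i}\w+\of_{\nh i}\w$ produces precisely $\,\mq^*\nh\ko\of\,$ (the paper does this computation explicitly, obtaining the very term $\,2\hh(\partial\nnh_p\w\of_{\nh i}\w)\,dx^{\hh i}\nnh\odot\hh dx^{\hh p}$ you flagged as the potential obstacle), and handle uniqueness via the translate-to-$S$ trick. The only minor difference is that the paper, for uniqueness, chooses $\,\of\,$ to be a $\,1$-form on $\,\bs\,$ (then viewed as a section of $\,\qs^*\tab$) rather than a general section, which spares one the need to discuss the $\,\partial\nnh_p\w\of_{\nh i}\w$ contribution in that step; your version works equally well.
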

\begin{proof}Existence: with $\,\xi_{\hh i}\w=g_{ia}\w x^{\hh a}\nh$,
(\ref{get}) rewritten as (\ref{hnp}) defines a metric $\,g\,$ with the
required properties. Namely, $\,S$, identified 
with the zero section $\,\qe\,$ of $\,\qs^*\tab\,$ is given by
$\,\xi_{\hh i}\w=0$, and so $\,g\,$ restricted to $\,\qe\,$ equals $\,\stf\,$
(as $\,\stf\hn_{pq}\w=\vh_{pq}\w$ due to the assumption about consistency).
Finally, (\ref{rem}) follows since the
$\,\of$-pull\-back operation applied to differential forms on $\,\mf\,$
commutes with $\,d$, preserves functions on $\,\qe\,$ viewed as defined 
on $\,\mf\nh$, and $\,\of^*\nh\xi_i\w=\xi_i\w\circ\of=\xi_i\w+\of_{\hn i}\w$,
so that $\,\of^*\nh g\,$ equals the right-hand side of (\ref{hnp}) plus
$\,(\partial\nh\nnh_j\w\of_{\nh i}\w+\partial\nnh_i\w\of_{\nh j}\w
-2\hs\of_{\nh k}\w\vg_{\hskip-2.6ptij}^{\hs k})
\,dx^{\hh i}\nnh\odot\hh dx^{\hh j}\nh
+2\hh(\nh\partial\nh\nnh_p\w\of_{\nh i}\w\nnh)\hs
dx^{\hh i}\nnh\odot\hh dx^{\hh p}\nh=\ko\of$.

Uniqueness of $\,g\,$ follows easily as well: given $\,x\in\mf\nh$, let
$\,z=\mq\hh(x)\in\qe\,$ and $\,y=\qs(z)=\pi(x)\in\bs$, so that $\,x\,$ lies in 
the affine space $\,\mq^{-\nnh1}\nh(z)$. If we fix a $\,1$-form $\,\of\,$
on $\,\bs\,$ for which $\,o=x+\of\nh_y\w$ is the unique intersection point of 
$\,S\,$ and $\,\mq^{-\nnh1}\nh(z)$ (the origin in $\,\mq^{-\nnh1}\nh(z)\,$
provided by the global section $\,S$) and treat it as a section of the
pull\-back bundle $\,\qs^*\tab$, still denoted by $\,\of$, then, by
(\ref{rem}), 
$\,g_x\w=\of_{\nnh x}^*g_o\w-[\pi^*\nh\ko\of]_x$, and our claim follows since
$\,x\,$ and $\,S\,$ uniquely determine $\,o$.
\end{proof}
We can now prove the following generalization of Theorem~\ref{afifi}.
\begin{theorem}\label{gnrlz}Let there be given a bundle\/ $\,\mf\,$ of
co\-tan\-gent-prin\-ci\-pal bundles with the bundle projections\/
$\,\mf\nh\to\qe\to\bs\,$ as in\/ {\rm(\ref{tbp})}, a tor\-sion-free
connection\/ $\,D\,$ on\/ $\,\bs$, and a vertical metric\/ $\,\vh\,$ on\/ 
$\,\qe$. If\/ $\,g\,$ is a Riemann pull\-back-ex\-ten\-sion metric 
on\/ $\,\mf\nh$, for\/ $\,D,\qe\,$ and\/ $\,\vh$, then the vertical
distributions\/ $\,\dz$ and\/ $\,\vz\hh$ of the bundle projections\/
$\,\mf\nh\to\qe\,$ and\/ $\,\mf\nh\to\bs\,$ are each other's\/ 
$\,g$-or\-thog\-o\-nal complements, $\,\dz$ is\/ $\,g$-null and\/ 
$\,g$-par\-al\-lel, the Le\-vi-Ci\-vi\-ta connection\/ $\,\nabla$ of\/ 
$\,g\,$ is pro\-ject\-a\-ble along both\/ $\,\dz\,$ and\/ 
$\,\vz\nh$, while the\/ $\,\vz\nh$-pro\-ject\-ed tor\-sion-free connection
on\/ $\,\bs$, cf.\ Lemma\/~{\rm\ref{prjcn}}, coincides with\/ $\,D$, and\/
$\,\vh\,$ arises from $\,g\,$ via\/ {\rm(\ref{vrm})}.

Conversely, let the Levi-Civita connection\/ $\,\nabla$ of a
pseu\-\hbox{do\hs-}Riem\-ann\-i\-an metric\/ $\,g\,$ on an\/
$\,n$-di\-men\-sion\-al manifold be pro\-ject\-a\-ble along a null parallel
distribution\/ $\,\dz\hs$ of dimension\/ $\,r$. Then, for every point\/ $\,x\,$
of the manifold, there exist data\/ $\,\mf\nh,\qe,\bs,D,\vh\,$ as above and a
dif\-feo\-mor\-phic identification of a neighborhood\/ of\/ $\,x$ with an
open set in\/ $\,\mf\,$ under which\/ $\,g,\dz\nh$, the connection projected
along\/ $\,\vz\nh=\dz^\perp$, cf.\ Lemma\/~{\rm\ref{prjcn}}, 
and the vertical metric on a local leaf space of\/ $\,\dz\,$ arising from\/
$\,g$ via\/ {\rm(\ref{vrm})} correspond to a Riemann pull\-back-ex\-ten\-sion
metric for\/ $\,D,\qe\,$ and\/ $\,\vh$, the vertical distribution 
of the af\-fine bundle\/ $\,\mf\,$ over\/ $\,\qe$, along with\/ $\,D\,$ and\/
$\,\vh$.
\end{theorem}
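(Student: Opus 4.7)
The plan is to reduce both directions of Theorem~\ref{gnrlz} to the coordinate characterizations already established---Walker's theorem, Lemma~\ref{nbprj}, Lemma~\ref{lccoo}, and the explicit formula (\ref{hnp}) of Lemma~\ref{gnqdf}---in close analogy with the proof of Theorem~\ref{afifi}, the essential new point being the separation of the additional $(x^p)$-di\-rec\-tions that distinguish $\qe$ from $\bs$.

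For the forward direction, I invoke Lemma~\ref{gnqdf} to put $g$ into the coordinate form (\ref{ghc}) subject to (\ref{cdt}), with the last $r$ coordinates spanning $\dz$. Comparison with (\ref{gab}) under the index ranges (\ref{rin}) shows that $\dz$ is $g$-null and $\nabla$-par\-al\-lel, while $\vz\nh=\dz^\perp$ is spanned by $\partial\nnh_i\w,\partial\nnh_p\w$, matching $\mathrm{Ker}\,d\pi$. The decomposition $g_{jk}\w=\stf\hn_{jk}\w-2g_{ia}\w x^a\vg_{\hskip-2.6ptjk}^{\hs i}$ in (\ref{ghc}) realizes (\ref{eqv}) with $B_{ajk}\w=-2g_{ia}\w\vg_{\hskip-2.6ptjk}^{\hs i}$, so Lemma~\ref{nbprj} gives $\dz$-pro\-ject\-a\-bil\-i\-ty of $\nabla$, equivalent by (\ref{oco}) to $\vz$-pro\-ject\-a\-bil\-i\-ty. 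Combining Lemma~\ref{gmijk} with the last clause of Lemma~\ref{lccoo} identifies the projected components as $\vg_{\hskip-2.6ptjk}^{\hs i}$, i.e.\ with $D$, and the restriction $g_{pq}\w dx^{\hh p}\odot dx^{\hh q}$ of $g$ to a $\pi$-fibre coincides with the $\mq$-pull\-back of $\vh$, yielding (\ref{vrm}).

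For the converse, I assume $\nabla$ is pro\-ject\-a\-ble along an $r$-di\-men\-sion\-al null parallel $\dz$. Walker's theorem supplies, near each point, local coordinates realizing (\ref{gab}) with (\ref{rin}) and $\dz$ spanned by $\partial\nnh_a\w$. Lemma~\ref{nbprj} applied to $\dz$-pro\-ject\-a\-bil\-i\-ty produces the decomposition (\ref{eqv}) of $g_{jk}\w$; I then define $\vg_{\hskip-2.6ptjk}^{\hs i}:=-\tfrac{1}{2}g^{\hs ai}B_{ajk}\w$ via the fixed constant matrix $[g_{ia}\w]$ (cf.\ Lemma~\ref{gmijk}), and set $\stf\hn_{ip}\w:=g_{ip}\w$, $\vh_{pq}\w:=g_{pq}\w$, with $\stf\hn_{jk}\w$ as in (\ref{eqv}). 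The remaining equalities in (\ref{gab}) guarantee the $\partial\nnh_a\w$-in\-de\-pen\-dence required in (\ref{cdt}), so $\vh$ and $\stf$ descend to the $(x^{\hh i},x^{\hh p})$-co\-or\-di\-nate leaf space $\qe$ of $\dz$, which fibers over the $x^{\hh i}$-co\-or\-di\-nate leaf space $\bs$ of $\vz$, while $D$ with components $\vg_{\hskip-2.6ptjk}^{\hs i}$ lives on $\bs$. Taking the evident coordinate projections $\mq,\qs,\pi$ as in (\ref{tbp}) and using $\xi_{\hh i}\w=g_{ia}\w x^{\hh a}$ to identify the $x^{\hh a}$-fi\-bres with those of $\qs^*\tab$ promotes $\mf\,$ to a bundle of co\-tan\-gent-prin\-ci\-pal bundles over $\bs$. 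The coordinate form of $g\,$ now agrees with (\ref{hnp}), so Lemma~\ref{gnqdf}, applied with $S=\{x^{\hh a}=0\}$ and the tensor $\stf$ of components $(\stf\hn_{jk}\w,\stf\hn_{ip}\w,\vh_{pq}\w)$---consistent with $\vh\,$ by construction---identifies $g\,$ as a Riemann pull\-back-ex\-ten\-sion for $D,\qe,\vh$.

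The main obstacle I anticipate is intrinsic rather than computational: in the converse, verifying that the local co\-or\-di\-nate-based realization genuinely assembles into a bundle of co\-tan\-gent-prin\-ci\-pal bundles---i.e.\ that transitions between overlapping Walker charts respect the af\-fine structure over $\qe\,$ and identify its associated vector bundle with $\qs^*\tab$---requires some care, since Walker's theorem alone only supplies the charts. Once this intrinsic reading is secured, the transformation rule (\ref{rem}) is automatic from the coordinate computation of $\of^*\nh g\,$ already carried out inside the proof of Lemma~\ref{gnqdf}, and no further work is needed.
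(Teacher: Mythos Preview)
Your argument is correct and follows essentially the same route as the paper: both directions are reduced, via Walker's theorem, Lemma~\ref{nbprj}, Lemma~\ref{lccoo}, and Lemma~\ref{gnqdf}, to the observation that (\ref{ghc})--(\ref{cdt}) are precisely (\ref{gab})--(\ref{eqv}) with $B_{ajk}=-2g_{ia}\vg_{\hskip-2.6ptjk}^{\hs i}$.

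One remark on your stated obstacle: since the converse in Theorem~\ref{gnrlz} is purely local (``for every point $x$ \dots\ there exist data \dots\ and a diffeomorphic identification of a neighborhood of $x$''), you never need to glue overlapping Walker charts, so compatibility of transitions is a non-issue here. What does require care is that the identification of the $\mq$-fibres with $\tayb$ be the \emph{right} one---the one making $g$ a stand\-ard-type metric---and the paper secures this intrinsically by observing that in the coordinates $x^{\hh i},x^{\hh p},\xi_{\hh i}$ (with $\xi_{\hh i}=g_{ia}x^{\hh a}$) the relation $g(\partial/\partial x^{\hh i},\partial/\partial\xi_{\hh j})=\delta^i_j$ from (\ref{hnp}) exhibits each $\partial/\partial\xi_{\hh i}$ as the image of $dx^{\hh i}\in\tayb$ under the canonical isomorphism (\ref{pxg}). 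This replaces your coordinate identification with a metric-intrinsic one and dissolves the concern you raised.
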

\begin{proof}The first part follows from Lemmas~\ref{gnqdf},~\ref{nbprj} and
the final clause of Lemma~\ref{lccoo}: (\ref{cdt}) -- (\ref{ghc}) are just 
(\ref{gab}) -- (\ref{eqv}) for 
$\,B\nnh_{ajk}\w=-2g_{ia}\w\nh\vg_{\hskip-2.6ptjk}^{\hs i}$ and our 
$\,\stf\hn_{jk}\w$.

Under the hypotheses of the second part, Walk\-er's theorem \cite{walker}
applied to $\,\dz$ gives (\ref{gab}) with (\ref{rin}) on a coordinate
neighborhood $\,\,U\,$ of $\,x\,$ which may also be assumed to carry the three
bundle projections as in (\ref{bpv}), the first two of which send
$\,x^{\hh i}\nh,x^{\hh p}\nh,x^{\hh a}$ to $\,x^{\hh i}\nh,x^{\hh p}$ and,
respectively, $\,x^{\hh i}\nh,x^{\hh p}$ to $\,x^{\hh i}\nh$. Now (\ref{gab})
-- (\ref{eqv}) become (\ref{ghc}) with (\ref{cdt}) if one fixes a nonsingular
$\,r\times r\,$ matrix $\,[g_{ia}\w]\,$ of constants and defines 
$\,\stf\hn_{ip}\w,\,\hn\vh_{pq}\w$ and $\,\vg_{\hskip-2.6ptjk}^{\hs i}\,$ by
$\,\stf\hn_{ip}\w=g_{ip}\w$, $\,\vh_{pq}\w=g_{pq}\w$ and
$\,B\nnh_{ajk}\w=-2g_{ia}\w\nh\vg_{\hskip-2.6ptjk}^{\hs i}$. Setting
$\,\xi_{\hh i}\w=g_{ia}\w x^{\hh a}\nh$, we may treat
$\,x^{\hh i}\nh,\xi_{\hh i}\w$ as the local coordinates for $\,\tab$,
associated with the coordinate system $\,x^{\hh i}$ in $\,\bs$, which leads
to the new coordinates $\,x^{\hh i}\nh,x^{\hh p}\nh,\xi_{\hh i}\w$ on
$\,\,U\,$ and, for the corresponding coordinate vector fields, (\ref{hnp})
gives $\,g(\partial/\partial x^{\hh i}\nh,\partial/\partial\xi_{\hn j}\w)
=\delta_{\nnh j}^i$. In other words, each
$\,v=\partial/\partial\xi_{\hh i}\w$, restricted to any each leaf of $\,\dz$,
is the image under the isomorphism (\ref{pxg}) of the $\,1$-form
$\,\xi=dx^{\hh i}$ on $\,\bs$, which realizes $\,\,U\,$ (made smaller, if 
necessary) as an open subset of the total space $\,\mf\,$ of a bundle of 
co\-tan\-gent-prin\-ci\-pal bundles, so as to identify (\ref{bpv}) with a 
restriction version of (\ref{tbp}).

In view of Lemma~\ref{gnqdf}, our $\,g$, given by (\ref{ghc}), is thus a
Riemann pull\-back-ex\-ten\-sion metric for $\,D,\qe\,$ and $\,\vh$, since
the components of $\,D$, the $\,\vz\nh$-pro\-ject\-ed tor\-sion-free
connection, are $\,\vg_{\hskip-2.6ptjk}^{\hs i}$ (see the final clause of 
Lemma~\ref{lccoo}), and $\,\vh$, arising from $\,g\,$ via (\ref{vrm}), 
has the components $\,\vh_{pq}\w=g_{pq}\w$. This completes the proof.
\end{proof}

\end{document}